\documentclass[reqno,11pt]{amsart}

\usepackage{amssymb,amsmath,amsthm,amsfonts,xcolor} 

\usepackage{comment,graphicx}
\usepackage{textcomp}
\usepackage{enumitem}

\usepackage[colorlinks=true, pdfstartview=FitV, linkcolor=blue, citecolor=blue, urlcolor=blue]{hyperref}

\usepackage{color,setspace,wrapfig,multicol}

\usepackage[labelformat=empty]{subcaption}

\usepackage[mathscr]{eucal}

\usepackage{lipsum}

\usepackage[left=3cm,right=3cm,top=2.5cm,bottom=2.5cm]{geometry}

\tolerance=7000
\allowdisplaybreaks
\numberwithin{equation}{section}

\newtheorem{theorem}{Theorem}[section]
\newtheorem{proposition}[theorem]{Proposition}
\newtheorem{lemma}[theorem]{Lemma}
\newtheorem{corollary}[theorem]{Corollary}
\newtheorem{remark}[theorem]{Remark}

\theoremstyle{definition}
\newtheorem{definition}[theorem]{Definition}


\newcommand{\be}{\mathbf{e}}

\newcommand{\cC}{\mathcal{C}}
\newcommand{\cF}{\mathcal{F}}
\newcommand{\cH}{\mathcal{H}}

\newcommand{\N}{\mathbb{N}}
\newcommand{\R}{\mathbb{R}}

\newcommand{\sU}{\mathscr{U}}

\newcommand{\dist}{\mathrm{dist}}
\newcommand{\sdist}{\mathrm{sdist}}
\newcommand{\intpart}[1]{\left\lfloor #1\right\rfloor}
\renewcommand{\tilde}{\widetilde}
\renewcommand{\d}{\mathrm{d}}
\newcommand{\sd}{\mathrm{sd}}

\newcommand{\p}{\partial}
\newcommand{\pOmega}{\partial^\Omega}
\newcommand{\cl}[1]{\overline{#1}}

\setlist[itemize]{leftmargin=6mm} 

\begin{document}

\title[Evolution of droplets in $\R^3$]{Consistency of minimizing movements with smooth  mean curvature flow of droplets with prescribed contact angle in $\R^3$}

\author[Sh. Kholmatov] {Shokhrukh Yu. Kholmatov} 
\address[Sh. Kholmatov]{University of Vienna,  Oskar-Morgenstern Platz 1, 1090 Vienna 
(Austria)}
\email{shokhrukh.kholmatov@univie.ac.at}

\keywords{generalized minimizing movements, capillarity functional, droplet, curvature, smooth curvature flow}

\subjclass[2010]{53C44, 49Q20, 35A15,  35D30, 35D35}

\date{\today}

\begin{abstract}
In this paper we prove that in $\R^3$ the minimizing movement solutions for mean curvature motion of droplets, obtained in \cite{BKh:2018}, coincide with the smooth mean curvature flow of droplets with a prescribed (possibly nonconstant) contact angle.
\end{abstract}

\maketitle

\section{Introduction}

Capillary droplets, known for their distinctive behavior resulting from the interplay of surface tension and capillary forces, have attracted considerable interest across a range of scientific and engineering fields, for instance in the study of wetting phenomena, energy minimizing drops and their adhesion properties, as well as because of their connections  with minimal surfaces (see e.g. \cite{AdS:2005,BB:2012,CM:2007,dGBQ:2004,dSGO:2007,Finn:1986}).

In this paper as in \cite{BKh:2018} we are interested in the mean curvature motion of a droplet in a halfspace with a prescribed (possibly nonconstant) contact angle. Such an evolution can be seen as mean curvature flow of hypersurfaces with a prescribed Neumann-type boundary condition. There are quite a few results related to the well-posedness of the classical mean curvature flow with boundary (see e.g. \cite{Huisken:1989,OU:1991III} for mean curvature flow with Dirichlet boundary conditions and \cite{AW:1994,Guan:1996,KKR:1995} for mean curvature flow with Neumann-type boundary conditions). 

The mean curvature evolution of bounded smooth sets (even without boundary conditions) can produce a singularity in finite time. To continue the flow after singularity, several notions of weak solutions have been introduced, see e.g. \cite{ATW:1993,Bellettini:2013,Brakke:1978,ChMNP:2019,ESS:1992,Giga:2006,Ilmanen:1994,LS:1995}, some of which have been extended to the case with boundary conditions: see e.g. \cite{GOT:2021,White:2021} for Brakke flow with Dirichlet and/or dynamic boundary conditions, \cite{GS:1991,HM:2022,KKR:1995} and \cite{BGM:2023} for viscosity flow with Neumann-type and Dirichlet boundary conditions, \cite{BKh:2018,HL:2021} for BV-distributional solutions with Neumann-type boundary conditions and \cite{BKh:2018} and \cite{MT:1999} for minimizing movements with Neumann-type and Dirichlet boundary conditions. 

In the current paper we study a consistency problem for the minimizing movement solution with the smooth mean curvature flow of droplets. Before the stating the main result of the paper, let us recall some definitions. As in \cite{BKh:2018} modelling the regions occupied by droplets by sets of finite perimeter in $\Omega:=\R^2\times(0,+\infty),$ and we introduce the capillary analogue of the Almgren-Taylor-Wang functional 
\begin{equation}\label{eq:capillar_ATW}
\cF_\beta(E;E_0,\tau):=\cC_\beta(E,\Omega)+\frac{1}{\tau}\int_{E\Delta E_0} \d_{E_0}(x)dx,
\end{equation}
where $E,E_0$ are sets of finite perimeter in $\Omega,$ i.e., $E,E_0\in BV(\Omega;\{0,1\}),$ $\tau>0,$
$$
\cC_\beta(E,\Omega):=P(E,\Omega) + \int_{\p\Omega} \beta\chi_Ed\cH^2
$$
is the capillary functional \cite{DPhM:2015,Finn:1986} for some $\beta\in L^\infty(\p\Omega),$ $\d_{E_0}(\cdot):=\dist(x,\Omega\cap\p^*E_0)$ and $\p^*E_0$ is the reduced boundary of $E_0.$ We endow $BV(\Omega;\{0,1\})$ with the $L^1(\Omega)$-convergence.

\begin{definition}[\textbf{Generalized minimizing movement, \cite{DeGorgi:93}}]
$\,$
\begin{itemize}
\item[\rm (a)]  Given $\tau>0,$ a family $\{E(\tau,k)\}_{k\in\N_0}$ is called a (discrete) \emph{flat flow} starting from $E_0$ and associated to $\cF_\beta$ provided that $E(\tau,0):=E_0,$
$$
\cF_\beta(E(\tau,k); E(\tau,k-1),\tau) = \min_{F\in BV(\Omega;\{0,1\})}\,\cF_\beta(F;E(\tau,k-1),\tau).
$$

\item[\rm(b)] A family $\{E(t)\}_{t\in[0,+\infty)}\subset BV(\Omega;\{0,1\})$ is called a \emph{generalized minimizing movement} (shortly, GMM) starting from $E_0$ if there exist a sequence $\tau_i\to0^+$ and flat flows $\{E(\tau_i,\cdot)\}$ starting from $E_0$ such that 
$$
\lim\limits_{i\to+\infty}\,|E(\tau_i,\intpart{t/\tau_i}) \Delta E(t)|=0,\quad t\ge0,
$$
where $\intpart{x}$ is the integer part of $x\in\R.$
\end{itemize}

\noindent
The collection of all GMM starting from $E_0$ and associated to $\cF_\beta$ will be denoted by $GMM(\cF_\beta,E_0).$
\end{definition}

In \cite{BKh:2018} we have established that if $\|\beta\|_\infty<1,$ then $GMM(\cF_\beta,E_0)\ne\emptyset$ for any bounded $E_0\in BV(\Omega;\{0,1\})$ and each GMM is $1/2$-H\"older continuous in time (see \cite[Theorem 7.1]{BKh:2018} and also Theorem \ref{teo:existence_GMM} below). We call the  elements of $GMM(\cF_\beta,E_0)$ a minimizing movement solution for mean curvature flow of droplets starting from $E_0.$

Now consider the regular case. Let $\beta\in C^{1+\alpha}(\p\Omega)$ (for some $\alpha\in(0,1]$) with $\|\beta\|_\infty<1,$ the initial set $E_0$ be bounded and the manifold $\Omega\cap \p E_0$ be a $C^{2+\alpha}$-hypersurface with boundary, satisfying the contact angle condition (the so-called Young's law \cite{DPhM:2015,Finn:1986})
\begin{equation}\label{sahudaza677}
\nu_{E_0}(x)\cdot \be_3 = -\beta\quad\text{on $\p\Omega\cap \cl{\Omega\cap \p E_0},$}
\end{equation}
where $\nu_{E}$ is the outer unit normal to $E$ and $\be_3=(0,0,1)\in\R^3.$ Then in view of \cite[Theorem B.1]{BKh:2018} there exists a unique family $\{E(t)\}_{t\in[0,T^\dag)},$ defined up to the maximal time $T^\dag,$ such that $E(0)=E_0,$ $E(t)$ satisfies the contact angle condition \eqref{sahudaza677} with $E(t)$ in place of $E_0$ and the surfaces $\Omega\cap \p E(t)$ move by their mean curvature (see also Theorem \ref{teo:short_time} below). For simplicity, let us call $\{E(t)\}$ the smooth mean curvature flow starting from $E_0$ with contact angle $\beta.$

Now we are in position to state the main result of the current paper. 

\begin{theorem}[\textbf{Consistency of GMM with smooth mean curvature flow}]\label{teo:consistence}
Let  $\beta\in C^{1+\alpha}(\p\Omega)$ for some $\alpha\in(0,1]$ with $\|\beta\|_\infty<1$ and $E_0$ be a bounded set such that $\cl{\Omega\cap \p E_0}$ is a $C^{2+\alpha}$-manifold with boundary satisfying the contact angle condition \eqref{sahudaza677}. Let $\{E(t)\}_{t\in [0,T^\dag)}$ be the unique mean curvature flow starting from $E_0$ and with contact angle $\beta.$ Then for every $F(\cdot)\in GMM(\cF_\beta,E_0)$ 
\begin{equation}\label{gmm_teng_smooth}
E(t) = F(t)\quad\text{for any $t\in [0,T^\dag).$}
\end{equation}
\end{theorem}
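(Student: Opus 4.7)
\medskip
\noindent\textbf{Proof strategy.} The natural approach is a two-sided barrier/comparison argument adapted to the capillary setting. I would construct smooth strict supersolutions $E^+_\varepsilon(t)$ and strict subsolutions $E^-_\varepsilon(t)$ of the capillary mean curvature flow that sandwich $E(t)$ in a thin tubular layer of width $\varepsilon$, and then exploit the one-step minimality defining the flat flow $\{F(\tau_i,k)\}$ that approximates $F$ to force
\begin{equation*}
E^-_\varepsilon(k\tau_i)\subset F(\tau_i,k)\subset E^+_\varepsilon(k\tau_i),\qquad 0\le k\tau_i\le T,
\end{equation*}
for any fixed $T<T^\dag$. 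Sending $\tau_i\to 0^+$ via the $L^1$-convergence in the definition of GMM, and then $\varepsilon\to 0^+$, will identify $F(t)$ with $E(t)$ on $[0,T]$; since $T<T^\dag$ is arbitrary this gives \eqref{gmm_teng_smooth}.

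\medskip
The barriers should be built from the signed distance $d(t,\cdot):=\sdist(\cdot,\Omega\cap\p E(t))$, taken negative inside $E(t)$. The regularity of the smooth flow (Theorem~\ref{teo:short_time}) together with the uniform Young condition \eqref{sahudaza677} supply $\delta_0>0$ such that, on the parabolic tubular set $U_{\delta_0}:=\{(t,x)\in[0,T]\times\Omega:|d(t,x)|<\delta_0\}$, the function $d$ is $C^{2,\alpha}$, satisfies $|\nabla d|=1$, obeys the compatibility $\p_3 d=-\beta$ on $\p\Omega\cap U_{\delta_0}$ (which encodes Young's law for every level set of $d$), and solves a perturbed heat equation
\begin{equation*}
\p_t d-\Delta d=O(d)\quad\text{on }U_{\delta_0}
\end{equation*}
with constants uniform in $[0,T]$. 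Setting $r_\varepsilon(t):=\varepsilon e^{Ct}$ with $C$ large enough, the super/sublevel sets $E^\pm_\varepsilon(t):=\{d(t,\cdot)<\pm r_\varepsilon(t)\}$ become strict smooth super/subsolutions of the capillary mean curvature flow that still satisfy the contact angle condition at $\p\Omega$.

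\medskip
The heart of the proof is an inductive one-step capillary comparison. Assuming $F(\tau_i,k-1)\subset E^+_\varepsilon((k-1)\tau_i)$, I would test the minimizer $F(\tau_i,k)$ against the competitor $F(\tau_i,k)\cap E^+_\varepsilon(k\tau_i)$ in $\cF_\beta(\cdot;F(\tau_i,k-1),\tau_i)$. The resulting nonnegative minimality difference can be converted, via integration by parts with $\nabla d$ on the thin ``crescent'' $F(\tau_i,k)\setminus E^+_\varepsilon(k\tau_i)$, into a quantity that is strictly negative unless the crescent is empty, because $E^+_\varepsilon$ is a strict supersolution and the $L^1$-distance term can only help the estimate (points of the crescent are far from $F(\tau_i,k-1)$). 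This forces $F(\tau_i,k)\subset E^+_\varepsilon(k\tau_i)$; the inclusion $E^-_\varepsilon(k\tau_i)\subset F(\tau_i,k)$ is symmetric. The $L^\infty$ bound on the one-step displacement from \cite{BKh:2018} (the analogue of the density estimates in the ATW theory) ensures that the iteration never leaves $U_{\delta_0}$.

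\medskip
The main obstacle is the correct handling of the capillary trace along $\p\Omega$ in the one-step comparison. The integration by parts producing the decisive sign must simultaneously absorb the boundary contribution
\begin{equation*}
\int_{\p\Omega}\beta\,\bigl(\chi_{F(\tau_i,k)}-\chi_{F(\tau_i,k)\cap E^+_\varepsilon(k\tau_i)}\bigr)\,\d\cH^2,
\end{equation*}
and this works precisely because the identity $\p_3 d=-\beta$ matches the Young-type boundary condition for the level sets of $d$ to the capillary weight. Making this computation rigorous when $\beta$ is only $C^{1+\alpha}$ and when $\p F(\tau_i,k)$ is merely of finite-perimeter regularity is the delicate technical point and, in my view, the main burden of the argument.
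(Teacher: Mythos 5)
There is a genuine gap, and it sits exactly where you flagged the ``main burden'': the boundary identity $\partial_3 d=-\beta$ on $\partial\Omega\cap U_{\delta_0}$ is false, and with it the claim that every sublevel set of $d(t,\cdot)=\sd_{E(t)}$ satisfies Young's law. The distance is taken to $\pOmega E(t)=\Omega\cap\partial E(t)$, a surface \emph{with boundary}; for $x\in\partial\Omega$ near the contact line $\gamma_t=\partial\Omega\cap\cl{\pOmega E(t)}$, the nearest-point projection onto $\pOmega E(t)$ falls on the interior of the surface on one side of $\gamma_t$ (where indeed $\nabla d(x)=\nu_{E(t)}(\pi(x))$ and $\partial_3 d\approx-\beta$), but on the other side it falls on $\gamma_t$ itself, and there $\nabla d(x)$ is the unit vector from $\pi(x)\in\partial\Omega$ to $x\in\partial\Omega$, hence \emph{tangential} to $\partial\Omega$, so $\partial_3 d=0$. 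Consequently, depending on the sign of $\beta$, one of your two barriers $E^\pm_\varepsilon(t)=\{d(t,\cdot)<\pm r_\varepsilon(t)\}$ meets $\partial\Omega$ at the wrong angle (cosine $0$ rather than $\beta$), and $d$ is only $C^{1,1}$ (not $C^{2,\alpha}$) across the ridge where the projection regime changes, which also undermines the claimed parabolic inequality for $d$. The boundary integral you want to absorb in the one-step comparison therefore does not cancel, and the crescent argument breaks down precisely at $\partial\Omega$.

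The paper's Lemma~\ref{lem:foliations} (and Corollary~\ref{cor:time_foliations}) fixes exactly this: it extends $\pOmega E(t)$ smoothly below $\partial\Omega$, takes level sets of the distance to the extended surface in the interior, and then explicitly grafts on near the contact line a $C^{2+\alpha}$ collar that meets $\partial\Omega$ at the slightly \emph{perturbed} angle $\beta\pm s$. The perturbation is not cosmetic: it is what makes the contact point in the discrete comparison (Lemma~\ref{lem:barrier}) provably interior, so the usual curvature comparison at a first touching point applies, and it is what lets the Euler--Lagrange condition $\nu_{E_\tau}\cdot\be_3=-\beta$ of the minimizer (Theorem~\ref{teo:propertie_minimizers}(g)) be strictly separated from the barrier's angle. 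The paper also needs an ingredient your sketch does not supply: before the smooth barrier lemma can be invoked, one must already have a coarse inclusion $G_0^-[\rho,s,\cdot]\subset F(\tau_j,\cdot)\subset G_0^+[\rho,s,\cdot]$ to feed in as the hypothesis $E_\tau\subset G_\tau$; this is produced via the discrete comparison with Winterbottom shapes (Theorem~\ref{teo:compare_with_ball}) together with the strong comparison principle and the stability estimate \eqref{military_conflict}. Without these two elements --- perturbed contact angle and the truncated-ball comparison --- the inductive one-step scheme you outline cannot close.
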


Thus, as in the classical mean curvature flow without boundary \cite{ATW:1993,JN:2023} the minimizing movement solutions for the mean curvature evolution of droplets in $\R^3,$ coincides with the smooth mean curvature flow as long as the latter exists.

To prove Theorem \ref{teo:consistence} we adapt some ideas in \cite[Theorem 7.3]{ATW:1993}. First consider the smooth flow $\{E(t)\}_{t\in [0,T^\dag)}$ of droplets starting from a smooth droplet $E_0.$ Using the uniquness and the stability of the flow, for each $T\in(0,T^\dag)$ we find $\rho,\sigma>0$ and inner and outer barriers $\{G^\pm[r,s,a,t]\}$ for $r\in [0,\rho],$ $s\in[0,\sigma],$ $a\in [0,T)$ and $t\in [a,T],$ consisting of forced mean curvature evolutions starting from a sort of tubular neighborhoods $G_0^\pm[r,s,a]$ of $E(a),$ with slightly perturbed contact angle (see Corollary \ref{cor:time_foliations} and Theorem \ref{teo:short_time}). Next consider any flat flows $\{F(\tau_j,k)\}$ such that 
\begin{equation}\label{ahsz763fff}
F(\tau_j,\intpart{\cdot/\tau_j}) \to F(\cdot)\quad\text{in $L^1(\Omega)$ as $j\to+\infty$}
\end{equation}
for some $F(\cdot)\in GMM(\cF_\beta,E_0).$ We show the existence of $\bar t>0$ depending only on $\rho$ and $\{E(t)\}$ such that if
\begin{equation}\label{ajsdaiu7678}
G_0^-[0,s,t_0]\subset F(\tau_j,k_0)\subset G_0^+[0,s,t_0]
\end{equation}
for some $t_0\in [0,T)$ and $k_0\ge0,$ then
\begin{equation}\label{mubsatz667}
G^-[0,s,t_0,t_0+k\tau_j]\subset F(\tau_j,k_0+k)\subset G^+[0,s,t_0,t_0+k\tau_j]
\end{equation}
for all $0\le k\le \intpart{\bar t/\tau_j}$ (see Lemma \ref{lem:chaklpakes}). In particular, 
assuming $t_0=0$ and letting $j\to+\infty$ in \eqref{rutzr} and using \eqref{ahsz763fff} we get 
$$
G^-[0,s,0,t]\subset F(t)\subset G^+[0,s,0,t],\quad t\in [0,\bar t).
$$
Now using the smooth dependence of $G^\pm$ on its parameters, letting $s\to0^+$ and recalling $G^\pm[0,0,a,t]=E(t)$ for $t\in [a,T]$ we deduce $E(t)=F(t)$ for all $t\in [0,\bar t).$
To show the equality after $\bar t,$ we observe that  the inclusions in \eqref{mubsatz667} yield
\begin{equation}\label{rutzr}
G_0^-[0,4g(2s),t_0 + \bar t]\subset F(\tau_j,k_0 + \bar k_j)\subset G_0^+[0,4g(2s),t_0 + \bar t]
\end{equation}
for some increasing continuous function $g$ with $g(0)=0$ and all $s\in(0,\sigma)$ with $4g(2s)<\sigma,$ where $\bar k_j:=\intpart{\bar t/\tau_j}.$
This replaces \eqref{ajsdaiu7678} with  $s:=4g(2s),$ $t_0:=t_0+\bar t$ and $k_0:=k_0+\bar k_j.$ Thus, applying the above argument again, we deduce $E(t)=F(t)$ for any $t\in [\bar t,2\bar t).$ Since $T$ is finite and $\bar t$ depends only on $\rho$ and $\{E(t)\},$ in at most $\intpart{T/\bar t}+1$ steps we reach $E(t)=F(t)$ for any $t\in [0,T).$

One of the main difficulties here is to show the existence of $\bar t$ and the passage from inclusions in  \eqref{ajsdaiu7678} to the inclusions of the form 
\begin{equation*}
G_0^-[\rho,s,t_0+k\tau_j]\subset F(\tau_j,k_0+k)\subset G_0^+[\rho,s,t_0+k\tau_j],\quad 0\le k\le \intpart{\bar t/\tau_j}
\end{equation*}
(see the proof of Lemma \ref{lem:chaklpakes}).
In \cite{ATW:1993} this issue was solved by employing the flat flows starting from balls and discrete comparison principles. In our case we still have the comparison principles (see Lemma \ref{lem:compare_setg}), but because of the boundary terms, we cannot use balls, rather we use Winterbottom shapes \cite{Kholmatov:2024,KSch:2024,Maggi:2012} (see Theorem \ref{teo:compare_with_ball}), which are the solution of certain isoperimetric inequality associated to the capillary functional. Hying \eqref{rutzr} and using the comparison of minimizers of $\cF_\beta$ with smooth inner and outer barriers (Lemma \ref{lem:barrier}), as in \cite[Theorem 7.3]{ATW:1993} we deduce \eqref{mubsatz667}.

It is worth to notice that due to the presence of boundaries, the methods of \cite{JN:2023}, which strongly rely on the uniform ball conditions, seem not applicable in our setting.

The paper is organized as follows. In Section \ref{sec:preliminaries} we provide some preliminary definitions and results which will be important in the proof of Theorem \ref{teo:consistence}. Namely, we study the smooth mean curvature evolution of droplets with prescribed contact angle $\beta,$ and its various features such as forced evolution of its small tubular neighborhoods (Theorem \ref{teo:short_time}) and comparison principles (Theorem \ref{teo:comparison}). Moreover, we recall some properties of the minimizers of $\cF_\beta$ from \cite{BKh:2018} (Theorem \ref{teo:propertie_minimizers}), the existence of GMM (Theorem \ref{teo:existence_GMM}) and study the comparison  of flat flows with truncated balls (Theorem \ref{teo:compare_with_ball}). We complete this section with weak comparison properties of inner and outer barriers for minimizers of $\cF_\beta$ (Lemma \ref{lem:barrier}). These results will be the key arguments in the proof of \eqref{gmm_teng_smooth} in the concluding Section \ref{sec:proof_consos}.

\subsection*{Acknowledgements.} 
I acknowledge support from the Austrian Science Fund  (FWF) Lise Meitner Project M2571 and Stand-Alone Project P33716. Also I am grateful to Francesco Maggi for his discussions on the regularity of contact sets of minimizers of the capillary functional, and in particular, showing his paper \cite{DPhM:2017} with Guido De Philippis.

\section{Preliminaries}\label{sec:preliminaries}

\subsection*{Notation}
In this section we introduce the notation and some definitions which will be used throughout the paper.
Unless otherwise stated, all sets we consider are sets of finte perimeter in $\R^3.$ The coordinates $(x_1,x_2,x_3)$ of $x\in\R^3$ are given with respect to the standard basis $\{\be_1,\be_2, \be_3\}.$ By $B_r(x)$  we denote the open ball in $\R^3$ of radius $r>0$ centered at $x.$ The notion $|F|$ stands for the Lebesgue measure of $F\subset\R^3$. 

Throughout the paper we assume 
$$
\Omega:=\R^2\times(0,+\infty),
$$ 
and by $BV(\Omega;\{0,1\})$ we denote the collection of all sets of finite finite perimeter in $\Omega.$ 

By $\beta\in L^\infty(\p\Omega)$ we denote a relative adhesion coefficient of the boundary $\p\Omega=\R^2\times\{0\}$ of $\Omega$ and throughout we assume
\begin{equation}\label{beta_coercive}
\exists \eta\in(0,1/2):\quad \|\beta\|_\infty\le 1-2\eta.
\end{equation}

Given $E\in  BV(\Omega,\{0,1\})$ we denote by 

\begin{itemize}
\item[-] $P(E,U)$ the {\it perimeter} of $E$ in an open set $U\subset \Omega,$

\item[-] $\p^*E$ the reduced boundary of $E,$

\item[-] $\nu_E(x)$ the generalized outer unit normal of $E$ at $x\in \p^*E.$
\end{itemize}
We always assume that every $E$ coincides with its points $E^{(1)}$ of density one so that $\p E=\cl{\p^*E}.$ 
We refer, for instance, to \cite{AFP:2000,Giusti:1984,Maggi:2012} for a more comprehensive information on sets of finite perimeter.

Given $E\in BV(\Omega;\{0,1\}),$ we define the \emph{distance function} from the (reduced) boundary in $\Omega$ as
$$
\d_E(x):=\dist(x,\Omega\cap \p^*E),\quad x\in \cl{\Omega}.
$$
Similarly, we define the \emph{signed distance} as
\begin{equation*} 
\sd_E(x) =  
\begin{cases}
\dist(x,\Omega\cap \p^*E) & x\in \Omega \setminus E,\\
-\dist(x,\Omega\cap \p^*E) & x\in E
\end{cases}
\end{equation*}
for $E\in BV(\Omega;\{0,1\}).$ 
We also write
\begin{equation*}
\pOmega E:= \Omega\cap \p E
\end{equation*}
and
$$
E\prec F\qquad \Longleftrightarrow \qquad E\subset F\quad \text{and}\quad \dist(\pOmega E, \, \pOmega F)>0
$$
for $E,F\in  BV(\Omega;\{0,1\}).$ Note that
\begin{equation}\label{compare_trunc_sdist}
E\subset  F\quad\Longleftrightarrow\quad \sd_E \ge \sd_F \,\,\text{in $\Omega$}
\qquad \text{resp.}\qquad 
E\prec F\quad\Longleftrightarrow\quad \sd_E > \sd_F \,\,\text{in $\Omega.$}
\end{equation}

The following proposition shows the connection between the regular surfaces and distance functions. 

\begin{proposition}\label{prop:regular_distance}
Let $\Gamma$ be a $C^{2+\alpha}$-surface (not necessarily connected, and with or without boundary) in $\Omega$ for some $\alpha\in[0,1]$. Then:

\begin{itemize}
\item[\rm (a)] for any $x\in \Gamma$ there exists $r_x>0$ such that $\Gamma$ divides $B_{r_x}(x)$ into two connected components and $\dist(\cdot,\Gamma)\in C^{2+\alpha}(B_{r_x}(x)\setminus \Gamma);$

\item[\rm(b)] if $\Gamma$ is compact and has no boundary, then $\inf_{x\in \Gamma} r_x >0,$ i.e., the radius $r_x$ in (a) can be taken uniform in $x;$

\item[\rm(c)] if $\Gamma = \pOmega E$ for some $E\subset\Omega,$ then for any $x\in \Gamma$ there exists $r_x>0$ such that $B_r(x)\subset\Omega$ and $\sd_E \in C^{2+\alpha}(B_{r_x}(x)).$
\end{itemize}
\end{proposition}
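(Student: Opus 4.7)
For part (a), I would work in a local chart around $x$ in which, by the $C^{2+\alpha}$ assumption, $\Gamma$ is parametrized as a graph over its tangent plane $T_x\Gamma$, so the unit normal field $\nu$ along $\Gamma$ is of class $C^{1+\alpha}$. Define the tubular map $\Phi(y,t) := y + t\,\nu(y)$ on a product $U \times (-\delta,\delta)$ with $U\subset\Gamma$ a small chart around $x$. The differential $D\Phi(x,0)$ is invertible (its columns form a tangent basis of $\Gamma$ at $x$ together with $\nu(x)$), so the inverse function theorem produces a $C^{1+\alpha}$-diffeomorphism onto an open set containing a ball $B_{r_x}(x)$. After possibly shrinking $r_x$, $\Gamma \cap B_{r_x}(x)$ is the image of $U \times \{0\}$ and the sign of $t$ separates $B_{r_x}(x)\setminus\Gamma$ into exactly two connected components. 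For $z = \Phi(y,t)$ with $t \ne 0$, the closest point of $\Gamma$ to $z$ is $y$, so $\dist(z,\Gamma) = |t|$ and $\nabla\dist(z,\Gamma) = \mathrm{sgn}(t)\,\nu(y(z))$, where $y(\cdot)$ denotes the $C^{1+\alpha}$ nearest-point projection. Since $\nu \in C^{1+\alpha}(\Gamma)$, the composition $\nu \circ y(\cdot)$ is $C^{1+\alpha}$ on the tubular neighborhood, yielding $\dist(\cdot,\Gamma)\in C^{2+\alpha}(B_{r_x}(x)\setminus\Gamma)$.

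For part (b), apply (a) at each $x\in\Gamma$ to obtain $r_x>0$ and extract from $\{B_{r_x/2}(x)\}_{x\in\Gamma}$ a finite subcover $\{B_{r_{x_i}/2}(x_i)\}_{i=1}^N$ by compactness. Setting $r_\ast := \tfrac{1}{2}\min_i r_{x_i} > 0$, for any $y\in\Gamma$ pick $i$ with $y\in B_{r_{x_i}/2}(x_i)$; then $B_{r_\ast}(y)\subset B_{r_{x_i}}(x_i)$, and both the separation into two components and the $C^{2+\alpha}$-regularity of $\dist(\cdot,\Gamma)$ restrict to $B_{r_\ast}(y)$. The assumption that $\Gamma$ has no boundary ensures that $\Gamma\cap B_{r_\ast}(y)$ continues to divide $B_{r_\ast}(y)$ into the two pieces inherited from the larger tubular neighborhood, giving $r_y\ge r_\ast$ uniformly. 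For part (c), since $\Gamma = \pOmega E \subset \Omega$ and $\Omega$ is open, fix $\delta>0$ with $B_\delta(x)\subset\Omega$ and take $r_x$ smaller than both $\delta$ and the radius from (a). After orienting $\nu$ as the outer unit normal $\nu_E$, the two components of $B_{r_x}(x)\setminus\Gamma$ are $B_{r_x}(x)\cap E$ and $B_{r_x}(x)\setminus E$, so $\sd_E(\Phi(y,t)) = t$ throughout the tubular coordinates. The function $z\mapsto t(z)$ is already $C^{1+\alpha}$, and the identity $\nabla\sd_E(z) = -\nu_E(y(z))$, combined with $\nu_E,\,y(\cdot)\in C^{1+\alpha}$, upgrades this to $\sd_E\in C^{2+\alpha}(B_{r_x}(x))$; the regularity now extends across $\Gamma$ because $\sd_E$ is the signed coordinate $t$ rather than its absolute value.

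The main obstacle, clearest in part (a), is the bootstrap in regularity: the inverse function theorem only supplies $\Phi^{-1}\in C^{1+\alpha}$, whereas the statement demands $\dist(\cdot,\Gamma)\in C^{2+\alpha}$. The gain of one derivative follows from the identity $\nabla\dist(\cdot,\Gamma) = \nu\circ\pi$, with $\pi$ the nearest-point projection, together with the observation that $\nu$ is itself a first derivative of a $C^{2+\alpha}$ parametrization and hence of class $C^{1+\alpha}$. This is the classical regularity principle that the (signed) distance to a $C^{k,\alpha}$-hypersurface is $C^{k,\alpha}$ in a tubular neighborhood, as established, e.g., in Foote's \emph{Regularity of the distance function} (1984) and in the appendix of Gilbarg--Trudinger.
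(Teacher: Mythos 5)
Your proposal is correct and follows the classical tubular-neighborhood argument, which is precisely what the paper intends: the paper gives no proof of its own, simply citing Delfour--Zol\'esio and remarking that the assertions follow by ``passing to the local coordinates.'' Your key step — the bootstrap $\nabla\dist(\cdot,\Gamma)=\pm\,\nu\circ\pi$, upgrading the $C^{1+\alpha}$ regularity from the inverse function theorem to $C^{2+\alpha}$ because $\nu$ and the projection $\pi$ are each $C^{1+\alpha}$ — is exactly the right mechanism, and the signed-distance variant in part (c) to pass smoothly across $\Gamma$ is handled correctly.
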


These assertions are well-known  (see e.g. \cite{DZ:2011}), and can be proven using the local geometry of $\Gamma,$ i.e. passing to the local coordinates. In case of Proposition \ref{prop:regular_distance} (c) we write $\kappa_E:=\kappa_\Gamma$ to denote the mean curvature of $E$ along the boundary portion $\Gamma$ with respect to the unit normal to $\Gamma,$ outer to $E.$ We also set 
$$
\|II_E\|_\infty :=\sup_{x\in \Gamma} \,|II_\Gamma(x)|,
$$
where $II_\Gamma$ is the second fundamental form of $\Gamma.$
In what follows we always assume that the unit normals of $\pOmega E$ are outer to $E$ so that the mean curvature of the boundaries of convex sets are nonnegative. 

\subsection{Smooth mean curvature evolution of droplets} 

In this section we study mean curvature flow of droplets sitting on an inhomogeneous plane. Since we are mainly interested in droplets with a nonempty contact set on $\p\Omega,$ it is natural to restrict ourselves to the ones without connected components not touching to $\p \Omega.$ Such a restriction leads to the following definition.

\begin{definition}[\textbf{Admissibility}]\label{def:admissible_sets}
$\,$
\begin{itemize}
\item[(a)] We say a set $E\subset\Omega$  is \emph{admissible} provided that there exist $\alpha\in(0,1],$ a bounded $C^{2+\alpha}$-open set $\sU\subset\R^2$ and a $C^{2+\alpha}$-diffeomorphism $p\in C^{2+\alpha}(\cl{\sU};\R^3)$ satisfying  
\begin{equation*} 
p[\sU]=\Gamma,\qquad 
p[\p\sU]=\p\Gamma, \qquad 
p\cdot \be_3>0 \,\,\, \text{in $\sU$}
\qquad\text{and}\qquad
p \cdot \be_3=0 \,\,\, \text{on $\p\sU,$}
\end{equation*}
where $\Gamma:=\pOmega E.$ Any such map $p$ is called a \emph{parametrization} of $\Gamma.$ 

\item[(b)] Let $\beta\in C^{1+\alpha}(\p\Omega),$ $\alpha\in(0,1],$ satisfy \eqref{beta_coercive}. We say $E$ is \emph{admissible with contact angle $\beta$} if $E$ is admissible (with the same $\alpha$) and
$$
\nu_E \cdot \be_3 = -\beta \quad\text{on $\p\Omega\cap \cl{\Gamma}$}.
$$
We call that number 
\begin{equation}\label{minheights}
h_E:=\min_{x\in\cl{\Gamma},\,\nu_{E}(x)=x+\be_3}\, \,\,x\cdot \be_3
\end{equation}
the \emph{minimal height} of $E$.
Since $E$ satisfies the contact angle condition, by assumption \eqref{beta_coercive} $h_E>0.$

\item[(c)] Let $Q$ be a compact set in $\R^m$ for some $m\ge1.$ We say a family $\{E[q]\}_{q\in Q}$ of subsets of $\Omega$ is \emph{admissible} if there exist $\alpha\in(0,1],$ a bounded $C^{2+\alpha}$-open set $\sU\subset\R^2$ and a map $p\in C^{2+\alpha,2+\alpha}(Q \times \cl{\sU};\R^3)$ such that $p[q,\cdot]$ is a parametrization of $\pOmega E[q].$  

\item[(d)] We say a family $\{E[q,t]\}_{q\in Q,t\in [0,T)}$ of subsets of $\Omega$ \emph{admissible} if for any $T'\in(0,T)$ there exist  $\alpha\in(0,1],$ a bounded $C^{2+\alpha}$-open set $\sU\subset\R^2$ and a map $p\in C^{2+\alpha,1+\frac{\alpha}{2},2+\alpha}(Q \times [0,T'] \times \cl{\sU};\R^3)$ such that $p[q,t,\cdot]$ is a parametrization of $\pOmega E[q,t].$  

\end{itemize}
\end{definition}

\begin{remark}
$\,$
\begin{itemize}
\item[\rm(a)] By definition, if $E$ is an admissible set, then the $C^{2+\alpha}$-surface $\Gamma:=\pOmega E$ is diffeomorphic to a bounded  smooth open set in $\R^2$ and not necessarily connected (clearly, boundaries of two connected components do not touch). In particular, $\Gamma$ cannot not have ``hanging'' components compactly contained in $\Omega.$ Moreover, its boundary $\p\Gamma$ lies on $\p\Omega$ and the relative interior of $\Gamma$ does not touch to $\p\Omega.$  

\item[\rm(b)] We are slightly abusing the notion ``contact angle'' identifying the (true) contact angle $\theta\in (0,\pi)$ with its cosine $\beta=\cos\theta.$

\item[\rm(c)] When $Q$ is empty in Definition \ref{def:admissible_sets} (d), then we simply write $\{E[t]\}_{t\in [0,T)}$ to denote the corresponding admissible family.
\end{itemize}
\end{remark}

Recall that if $E\subset\R^3$ is a $C^{2+\alpha}$-set without boundary, then for sufficiently small $\rho>0$ the surfaces $\Gamma_r:=\{\sdist(\cdot,\p E)=r\}$ for $r\in (-\rho,\rho)$ foliates the tubular $\rho$-neighborhood of $\Gamma_0:=\p E,$ and the map $r\mapsto \Gamma_r$ smoothly varies. In the next lemma we construct a similar ``foliation'', for admissible sets with a given contact angle. 

\begin{lemma}[\textbf{Foliations}]\label{lem:foliations}
Let $\beta\in C^{1+\alpha}(\p\Omega),$ $\alpha\in(0,1],$ satisfy \eqref{beta_coercive} and $E_0$ be an admissible set with contact angle $\beta$. Then there exist positive numbers $\rho\in(0,1)$ and $\sigma\in (0,\eta),$ depending  only\footnote{We ignore the dependence on $\alpha$ and $\eta.$} on $\|II_{E_0}\|_\infty$ and $h_{E_0}$ (see \eqref{minheights}), and admissible families $\{G_0^\pm[r,s]\}_{(r,s)\in[0,\rho]\times[0,\sigma]}$ such that $G_0^\pm[0,0]=E_0$ and for all $(r,s)\in [0,\rho]\times [0,\sigma]$:

\begin{itemize}[itemsep=3pt]
\item[\rm(a)] $\dist(\pOmega G_0^\pm[r,s], \pOmega E_0)\ge r+s$ and
\begin{align*}
& G_0^-[r,s] \subset E_0 \subset G_0^+[r,s]\\
& \dist(\pOmega G_0^\pm[r,s],\pOmega G_0^\pm[0,s])=r,\\
&\dist(\pOmega G_0^\pm[0,s],\pOmega E_0)=s;
\end{align*}

\item[\rm(b)] $G_0^\pm[r,s]$ is admissible with contact angle $\beta\pm s.$ 

\end{itemize}
\end{lemma}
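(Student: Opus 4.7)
The plan is to carry out the construction in two stages: first build the $s$-family $\{G_0^\pm[0,s]\}_{s\in [0,\sigma]}$ that realises the contact-angle perturbation $\beta\pm s$ at Hausdorff distance $s$ from $\Gamma:=\pOmega E_0$, and then, for each fixed $s$, build the $r$-family $\{G_0^\pm[r,s]\}_{r\in[0,\rho]}$ by a normal flow that preserves the perturbed contact angle. The starting point for both stages is the parametrization $p\in C^{2+\alpha}(\cl{\sU};\R^3)$ of $\Gamma$ provided by the admissibility of $E_0$.

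For the first stage, I would look for $p_s^\pm=p+sV+s^2 R_s^\pm$ with $V\in C^{2+\alpha}(\cl{\sU};\R^3)$ chosen so that (i) $V\cdot\be_3=0$ on $\p\sU$, forcing the shifted contact line to stay on $\p\Omega$, (ii) in the interior $V$ is proportional to $\mp\nu_{E_0}\circ p$, delivering the inclusion $G_0^-[0,s]\subset E_0\subset G_0^+[0,s]$ and, after normalising $|V|=1$ on $\sU$, the equality $\dist(\pOmega G_0^\pm[0,s],\Gamma)=s$, and (iii) along $\p\sU$ the tangential part of $V$ is prescribed so that the $s$-derivative of the new $\nu\cdot\be_3$ on the contact line equals $\mp 1$, matching the linear change in $-(\beta\pm s)$. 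The higher-order remainder $R_s^\pm$ is then produced by the implicit function theorem applied to the $C^{2+\alpha}$ map $(s,R)\mapsto\nu_{p+sV+s^2 R}\cdot\be_3+(\beta\pm s)$ on $\p\sU$; the linearisation at $s=0$, $R=0$ is invertible because \eqref{beta_coercive} keeps the contact angle uniformly away from $0$ and $\pi$. This yields $G_0^\pm[0,s]$ with all properties in (a) that involve only $s$ and with contact angle $\beta\pm s$.

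For the second stage, Proposition \ref{prop:regular_distance} gives a one-sided tubular neighbourhood of $\pOmega G_0^\pm[0,s]$ on which $\sd_{G_0^\pm[0,s]}\in C^{2+\alpha}$ away from the contact line. Define a $C^{2+\alpha}$ vector field $X_s^\pm$ in a neighbourhood of $\pOmega G_0^\pm[0,s]$ such that $X_s^\pm=\pm\nu_{G_0^\pm[0,s]}$ in the interior of $\Omega$, $X_s^\pm\cdot\be_3=0$ on $\p\Omega$, and the tangential component at the contact line is fixed by the condition that the first variation of $\nu\cdot\be_3$ along the flow vanishes; this linear ODE on the tangential component is explicitly solvable and preserves the angle $\beta\pm s$ under the flow. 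Letting $\Psi_s^\pm(r,\cdot)$ denote the time-$r$ flow of $X_s^\pm$, set $G_0^\pm[r,s]$ to be the set enclosed by $\Psi_s^\pm(r,\pOmega G_0^\pm[0,s])$; since $|X_s^\pm|=1$ on $\pOmega G_0^\pm[0,s]$, the distance equality $\dist(\pOmega G_0^\pm[r,s],\pOmega G_0^\pm[0,s])=r$ follows for $r$ below the injectivity radius of the normal exponential map. Admissibility of the two-parameter family $\{G_0^\pm[r,s]\}$ is inherited from $p\in C^{2+\alpha}$, the $C^{2+\alpha}$-dependence on $s$ from stage one and the $C^{2+\alpha}$-dependence on $r$ from the flow.

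The main obstacle will be extracting uniform parameters $\rho\in(0,1)$ and $\sigma\in(0,\eta)$ depending only on $\|II_{E_0}\|_\infty$ and $h_{E_0}$. The bound on the second fundamental form controls the injectivity radius of the normal exponential map and hence the maximal $r$ for which the flow $\Psi_s^\pm$ is a $C^{2+\alpha}$-diffeomorphism; the strict positivity $h_{E_0}>0$ guaranteed by \eqref{beta_coercive} provides a vertical buffer so that the boundary-layer construction near the contact line does not collide with the ``bulk'' part of $\pOmega E_0$. The inclusion chain $G_0^-[r,s]\subset E_0\subset G_0^+[r,s]$ and the triangle-inequality lower bound $\dist(\pOmega G_0^\pm[r,s],\Gamma)\ge r+s$ then reduce to direct verification from the signs of the normal components of $V$ and $X_s^\pm$, and the fact that stages one and two move the boundary in the same (inward or outward) direction.
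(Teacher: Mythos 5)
The student's approach is genuinely different from the paper's: you build the $s$-family by a perturbative ansatz $p+sV+s^2R$ and invoke the implicit function theorem to correct the contact angle, whereas the paper builds the interior part from actual level sets of the distance function to an extension of $\Gamma_0$ below $\p\Omega$ and handles the boundary layer by a separate explicit offset of the contact curve. Your abstraction is elegant, but it does not deliver the distance identities required by the lemma, and this is a genuine gap rather than a stylistic difference.

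The problem is concentrated at the contact line. You impose $V\cdot\be_3=0$ on $\p\sU$ so that the new contact curve stays in $\p\Omega$, but this forces $V$ to be tangent to $\p\Omega$ there. If $V$ has component $c_n$ in the direction normal to $\gamma_0:=\p\Gamma_0$ inside $\p\Omega$, then the displaced boundary point $p(u)+sV(u)$ sits at distance roughly $s\,|c_n|\sin\theta_0$ from $\Gamma_0$, where $\cos\theta_0=-\beta$. Normalising $|V|=1$ on $\sU$ (the interior) says nothing about $c_n$ on $\p\sU$; and once you fix the tangential part of $V$ near $\p\sU$ by the contact-angle-derivative condition, you have spent that degree of freedom. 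Consequently the infimum of $\dist(\cdot,\Gamma_0)$ over $\pOmega G_0^\pm[0,s]$ is attained near the contact line and is strictly less than $s$ whenever $\beta\neq 0$. This breaks the exact equality $\dist(\pOmega G_0^\pm[0,s],\pOmega E_0)=s$ and, downstream, the lower bound $\dist(\pOmega G_0^\pm[r,s],\pOmega E_0)\ge r+s$. The same defect recurs verbatim in your second stage: the flow $X_s^\pm$ is tangential to $\p\Omega$ along the contact line, so $\dist(\pOmega G_0^\pm[r,s],\pOmega G_0^\pm[0,s])$ is strictly less than $r$ near $\p\Omega$, contradicting the claimed equality.

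The paper anticipates exactly this issue. It does not try to make a single unit-speed deformation carry both the contact-angle condition and the distance condition. Instead, it moves the planar contact curve $\gamma_s$ outward (resp.\ inward) in $\p\Omega$ by $4s$ (a factor strictly larger than $1$, tuned to the admissible range of $\beta$), then builds a short boundary-layer piece starting from $\gamma_s$ with contact angle $\beta\pm s$ and at distance $\ge 3s$ from $\Gamma_0$, then glues it to the level-set piece $\tilde\Gamma_s$ (at distance exactly $s$) by an interpolation region kept at distance $\ge s$. The overall infimum is then realised in the interior, giving the exact equality. To repair your argument you would need to decouple the displacement of the contact curve from the displacement of the interior, prescribe $|c_n|$ on $\p\sU$ large enough compared with $1/\sin\theta_0$ (which brings in the dependence on $\eta$), and verify explicitly that the contact-angle linearisation is compatible with this enlarged $c_n$; none of this is addressed in the proposal. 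A secondary issue: the lower bound on the $\sigma,\rho$ from the implicit function theorem must be traced back to $\|II_{E_0}\|_\infty$ and $h_{E_0}$ quantitatively, which you assert but do not establish.
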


\begin{proof}
Without loss of generality we assume that $E_0$ is admissible with the same H\"older exponent $\alpha\in(0,1]$ of $\beta.$ We divide the proof into three steps.
\smallskip

{\it Step 1.} We first construct $\sigma>0$ and the sets $G^\pm[0,s]$ for $s\in[0,\sigma].$

Since $\Gamma_0:=\pOmega E_0$ is $C^{2+\alpha}$ up to the boundary, there exists $b>0$ (depending only on the second fundamental form $II_{\Gamma_0}$ of $\Gamma_0$) and a $C^{2+\alpha}$-surface $\tilde \Gamma_0\subset \R^2\times (-b,+\infty)$ with $\p\Gamma_0\subset \{x_3=-b\}$ and $\Gamma_0 \subset \tilde \Gamma_0.$
By Proposition \ref{prop:regular_distance} (a) there exists $\sigma\in (0,\eta/4)$ (depending only on $\|II_{\tilde \Gamma}\|_\infty$ and $h_{E_0}$) such that for any $s\in [-4\sigma,4\sigma]$ the sets 
$$
\tilde \Gamma_s=
\begin{cases}
\{x\in \Omega\setminus \cl{E_0}: \,\,\dist(x,\tilde \Gamma_0)=s\} & s>0,\\
\{x\in \cl{E_0}: \,\,\dist(x,\tilde  \Gamma_0)=s\} & s\le0
\end{cases}
$$ 
are $C^{2+\alpha}$-surfaces with boundary, depending smoothly (at least $C^{2+\alpha}$) on $s.$ 

Note that $\gamma_0:=\p\Gamma_0$ is a finite union of planar $C^{2+\alpha}$-curves. Let $\hat F\subset \p\Omega$ the bounded planar open set  enclosed by $\gamma_0.$ Decreasing $\sigma$ is necessary (depending only on the $L^\infty$-norm of the planar curvatures of $\gamma_0$ and the minimal height $h_{E_0}$) such that for any $s\in[-\sigma,\sigma]$ we may assume that the sets
$$
\gamma_s:=
\begin{cases}
\{z\in \hat F:\,\dist(z,\gamma_0)=-4s\} & s<0,\\
\{z\in \p\Omega\setminus \hat F:\,\dist(z,\gamma_0)=4s\} & s\ge0,
\end{cases}
\qquad\text{and}\qquad 
\zeta_s:=\tilde \Gamma_s\cap 
\{x_3=\sigma\}
$$ 
are a union of $C^{2+\alpha}$-curves, homotopic to $\gamma_0.$ By the $C^{2+\alpha}$-dependence of $\gamma_s$ on $s$ we can find a bounded $C^{2+\alpha}$-open set $\sU\subset\R^2$ and a map $p\in C^{2+\alpha,2+\alpha}([-\sigma,\sigma]\times \p\sU;\R^3)$ such that $p[s,\p\sU] = \gamma_s$ for any $s\in[-\sigma,\sigma].$ 
Now as in \cite[Remark B.2]{BKh:2018} we can extend each $p[s,\cdot]$ as a diffeomorphism to an $\epsilon$-tubular neighborhood $\sU_r^-:=\{u\in \sU:\,\dist(u,\p \sU)\le \epsilon\}$ of $\p\sU$ (for small $\epsilon>0,$ still keeping $C^{2+\alpha}$-regularity both in $s$ and in $u$) such that the surface $p[s,\sU_r^-]$ lies in $\Omega,$ satisfies the contact angle condition with $\beta+s$ along $\gamma_s$ and the distance to $\Gamma_0$ is $\ge 3s.$ 
Let us also parametrize the truncations $\tilde \Gamma_s\cap \cl{\Omega^\sigma}$ of the surfaces $\tilde \Gamma_s$ by some diffeomorphism $p[s,\cdot]:\{u\in\sU:\,\,\dist(u,\p \sU)\ge8\epsilon\}\to\R^3$ (still keeping $C^{2+\alpha}$-regularity in $s\in [-\sigma,\sigma]$), where $\Omega^\sigma:=\R^2\times(\sigma,+\infty)$. 
Now we extend $p$ arbitrarily to $[-\sigma,\sigma]\times \{u\in\sU:\,\,\epsilon\le\dist(u,\p\sU)\le 8\epsilon\}$ in a way that $p\in C^{2+\alpha,2+\alpha}([-\sigma,\sigma]\times\cl{\sU}),$ $p[s,\cdot]$ is a diffeomorphism, $p[0,\sU]=\Gamma_0$ and the distance between surfaces 
$p[s,\cl{\sU}]$ and $\Gamma_0$ is equal to $s.$ 

For $s\in[0,\sigma]$ we denote by $G_0^+[0,s]$ and $G_0^-[0,s]$ the bounded sets enclosed by $\p\Omega$ and the $C^{2+\alpha}$-surfaces $\Gamma_0^+[0,s]:=p[s,\cl{\sU}]$ and $\Gamma_0^-[0,s]:=p[-s,\cl{\sU}],$ respectively.

Notice that by construction $G_0^{\pm}[0,0]=E_0$ and $\dist(\pOmega G_0^\pm[0,s],\pOmega E_0)=s$ for any $s\in[0,\sigma].$ 
\smallskip

{\it Step 2.} Now we construct $\rho>0$ and $G_0^\pm[r,s]$ for $r\in[0,\rho]$ and $s\in[0,\sigma].$ 

Since $\gamma_s$ is $C^{2+\alpha}$-regular in $s\in[-\sigma,\sigma],$ slightly decreasing $\sigma$ if necessary, we find $\rho>0$ depending only on $\sigma,$ $\|II_{E_0}\|_\infty$ and $h_{E_0}$ such that for any $r\in[0,\rho]$ the sets 
$$
\gamma_{r,-s}^- := \{z\in \hat F_s:\,\,\dist(z,\gamma_{s})=4r\}, \quad  s\le0
$$
and
$$
\gamma_{r,s}^+ :=\{z\in \p\Omega\setminus \hat F_s:\,\,\dist(z,\gamma_s)=4r\},\quad  s\ge0
$$
are finite unions of $C^{2+\alpha}$-curves homotopic to $\gamma_s,$ where $\hat F_s\subset\p\Omega$ is a bounded set enclosed by $\gamma_s$. As above consider the truncations $\Gamma_0^\pm[0,s]\cap \cl{\Omega^\sigma}.$ Since these truncations are smooth (at least  $C^{2+\alpha}$) family of $C^{2+\alpha}$-surfaces with boundary, using Proposition \ref{prop:regular_distance} (possibly decreasing $\rho$ and $\sigma$ depending only on $h_{E_0}$) we can show that for all $r\in [0,\rho]$ the sets 
$$
\Sigma_{r,s}^+:=\{x\in \Omega^{\sigma}\setminus G_0^+[0,s]:\,\dist(x,\Gamma_0^+[0,s])=r\}
$$
and 
$$
\Sigma_{r,s}^-:=\{x\in \Omega^{\sigma}\cap G_0^-[0,s]:\,\dist(x,\Gamma_0^-[0,s])=r\}
$$
are $C^{2+\alpha}$-families in $r$ and $s$ of $C^{2+\alpha}$-surfaces, whose boundaries are on $\{x_3 = \sigma\}$ and a union of $C^{2+\alpha}$-curves homotopic to $\gamma_0.$ Now as in step 1 we construct $C^{2+\alpha}$-surfaces $\Gamma_0^\pm[r,s]$ with boundary (still $C^{2+\alpha}$-regular in $r$ and $s$), first starting from $\gamma_{r,s}^\pm$ satisfying the contact angle condition with $\beta\pm s$, and then extending until we reach $\Sigma_{r,s}^\pm$ such a way that the distance between $\Gamma_0^\pm[r,s]$ and $\Gamma_0^\pm[0,s]$ is $r.$  

Now for any $s\in [0,\sigma]$ and $r\in[0,\rho]$ we denote by $G_0^\pm[r,s]$ the bounded set enclosed by $\p\Omega$ and $\Gamma^\pm[r,s]$. By construction and step 1, $G_0^\pm[r,s]$ satisfies assertions (a) and (b). 

We claim that $\sigma,$ $\rho$ and $\{G^\pm\}$ satisfies the remaining assertions of the lemma. Indeed, $\sigma$ depends only on $\|II_{E_0}\|_\infty,$ $h_{E_0},$  $\rho$ depends only on $\sigma,$ $h_{E_0}$ and $\|II_{E_0}\|_\infty,$ and $G^\pm[\cdot,\cdot]$ admits a parametrization $p^\pm\in C^{2+\alpha,2+\alpha}(([0,\rho]\times[0,\sigma])\times \cl{\sU}),$ which satisfies the assumptions of Definition \ref{def:admissible_sets} (c) of admissible family with $Q=[0,\rho]\times[0,\sigma].$ 
\end{proof}

\begin{corollary}\label{cor:time_foliations}
Let $\beta\in C^{1+\alpha}(\p\Omega),$ $\alpha\in(0,1],$ satisfy \eqref{beta_coercive} and $\{E[t]\}_{t\in[0,T)}$ be an admissible family contact angle $\beta.$ Then for any $T'\in(0,T)$ there exist $\rho\in(0,1)$ and $\sigma\in (0,\eta)$ depending only $\sup_{t\in[0,T']} \|II_{E[t]}\|_\infty$ and $\inf_{t\in[0,T']} h_{E[t]},$ and admissible families $\{G_0^\pm[r,s,a]\}_{(r,s,a)\in[0,\rho]\times[0,\sigma]\times[0,T']}$ such that $G_0^\pm[0,0,a]=E[a]$ and for all $(r,s,a)\in [0,\rho]\times [0,\sigma]\times [0,T']$:

\begin{itemize}[itemsep=3pt]
\item[\rm(a)]  $\dist(\pOmega G_0^\pm[r,s,a], \pOmega E[a])\ge r+s$ and
\begin{align*}
&G_0^-[r,s,a] \subset E[a] \subset G_0^+[r,s,a],\\
&\dist(\pOmega G_0^\pm[r,s,a],\pOmega G_0^\pm[0,s,a])=r,\\
&\dist(\pOmega G_0^\pm[0,s,a],\pOmega E[a])=s;
\end{align*}

\item[\rm(b)] $G_0^\pm[r,s,a]$ is admissible with contact angle $\beta\pm s.$ 

\end{itemize}

\end{corollary}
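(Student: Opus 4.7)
The plan is to apply Lemma \ref{lem:foliations} pointwise in the time parameter $a \in [0,T']$ with uniform constants, and then to upgrade the resulting pointwise construction into an admissible family jointly in $(r,s,a)$. Fix $T' \in (0,T)$. By Definition \ref{def:admissible_sets}(d) applied to $\{E[t]\}_{t \in [0,T)}$, there is a parametrization $p \in C^{2+\alpha,\,1+\alpha/2,\,2+\alpha}([0,T'] \times \cl{\sU};\R^3)$ of $\{\pOmega E[a]\}_{a \in [0,T']}$; in particular both $a \mapsto \|II_{E[a]}\|_\infty$ and $a \mapsto h_{E[a]}$ are continuous, and the contact angle condition combined with \eqref{beta_coercive} forces $h_{E[a]}>0$ for every $a$. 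Compactness of $[0,T']$ then yields
\begin{equation*}
M := \sup_{a \in [0,T']} \|II_{E[a]}\|_\infty < \infty, \qquad h := \inf_{a \in [0,T']} h_{E[a]} > 0.
\end{equation*}

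Since the positive constants $\rho, \sigma$ output by Lemma \ref{lem:foliations} depend only on the $\|II\|_\infty$ and minimal height of the input, they can be chosen uniformly in $a$, depending only on $M$ and $h$. Applying Lemma \ref{lem:foliations} to each $E[a]$ with this common $(\rho, \sigma)$ produces, for every $(r,s,a) \in [0,\rho] \times [0,\sigma] \times [0,T']$, the sets $G_0^\pm[r,s,a]$, automatically satisfying $G_0^\pm[0,0,a] = E[a]$ together with properties (a) and (b) of the corollary, which at fixed $a$ are precisely properties (a) and (b) of the lemma.

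What remains is to assemble these sets into an admissible family with a joint parametrization regular in $(r,s,a,u)$. For this I would revisit the three-step construction in the proof of Lemma \ref{lem:foliations} and track the dependence on $a$ of each ingredient: the smooth extension $\tilde\Gamma_0$ past $\p\Omega$, the signed-distance foliations $\tilde\Gamma_s$, the contact curves $\gamma_s$ and $\zeta_s$, and the distance-level surfaces $\Sigma_{r,s}^\pm$ are all built from $p[a,\cdot]$ and its signed distance via the implicit function theorem, tubular neighborhoods, and classical extension lemmas, each of which preserves the joint $C^{1+\alpha/2}$-in-$a$, $C^{2+\alpha}$-in-$u$ regularity of the input. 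Since the sizes of all neighborhoods and annuli used in the ad hoc glueings (notably over $\{\epsilon \le \dist(u,\p\sU) \le 8\epsilon\}$ and across the truncation level $\{x_3 = \sigma\}$) are controlled by $M$ and $h$ alone, they can be chosen independently of $a$; consequently the final parametrization $p^\pm[r,s,a,\cdot]$ is of class $C^{2+\alpha,\,2+\alpha,\,1+\alpha/2,\,2+\alpha}$ in $(r,s,a,u)$, realizing $\{G_0^\pm[r,s,a]\}$ as an admissible family in the sense of Definition \ref{def:admissible_sets}(d) with $(r,s)$ playing the role of the spatial parameter $q$ and $a$ the role of time.

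The main obstacle is precisely this last point: verifying that each of the glueings in the proof of Lemma \ref{lem:foliations} preserves the $a$-regularity of the input. While technically tedious, it is fundamentally routine because all cut-offs and partition-of-unity arguments there can be performed with thresholds depending only on $M$ and $h$, hence independent of $a$; the joint smoothness then follows from the input smoothness of $p[a,u]$ inherited from admissibility of $\{E[t]\}$.
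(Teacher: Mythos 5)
Your proof is correct and follows essentially the same route the paper takes: the paper's own proof of this corollary is a one-liner that simply invokes the time-parametrization from admissibility and asserts that the construction of Lemma \ref{lem:foliations} goes through jointly in the extra parameter $a$, which is exactly the content of your argument. You supply the details the paper elides (compactness of $[0,T']$ giving uniform $M$ and $h$, hence uniform $\rho,\sigma$, and the remark that all cut-off thresholds in the glueings of Lemma \ref{lem:foliations} depend only on $M,h$ and so preserve the $C^{1+\alpha/2}$-in-$a$ regularity), so the proposal is a faithful, slightly more explicit version of the paper's proof.
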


\begin{proof}
By the definition of admissibility, $E[\cdot]$ admits a parametrization $p\in C^{1+\frac{\alpha}{2},2+\alpha}([0,T']\times \cl{\sU})$ for any $T'\in(0,T).$ Therefore, repeating the same arguments of Lemma \ref{lem:foliations} we construct the required family $\{G_0^\pm[r,s,a]\}_{(r,s,a)\in[0,\rho]\times[0,\sigma]\times[0,T']}.$
\end{proof}

Now we study the existence and uniqueness of the mean curvature flow starting from a bounded droplet and its some stability properties. 

\begin{theorem}\label{teo:short_time}
Let $\beta\in C^{1+\alpha}(\p\Omega)$ (for some $\alpha\in(0,1]$) satisfy \eqref{beta_coercive} and $E_0\subset\Omega$ be an admissible set with contact angle $\beta.$ Then there exist a maximal time $T^\dag>0$ and a unique family $\{E[t]\}_{t\in [0,T^\dag)}$ of admissible sets in $\Omega$ such that $E[0]=E_0,$ $E[t]$ is admissible with contact angle $\beta$ and the hypersurfaces $\pOmega E[t]$ flow by mean curvature, i.e.,
\begin{equation}\label{mce_main}
v_{E[t]}(x) = -\kappa_{E[t]}(x) \quad \text{for $t\in[0,T^\dag)$ and $x\in \pOmega E[t],$}
\end{equation}
where $v_{E[t]}$ is the normal velocity of $\pOmega E(t).$ Moreover, for $T\in (0,T^\dag),$ let $\rho\in(0,1),$ $\sigma\in(0,\eta)$ and the families $\{G_0^\pm[r,s,a]\}_{(r,s,a)\in[0,\rho]\times [0,\sigma]\times[0,T']}$ be given by Corollary \ref{cor:time_foliations}. 
Then (possibly decreasing $\rho$ and $\sigma$ slightly, depending only on $\{E(t)\}$) there exist unique admissible families $\{G^\pm[r,s,a,t]\}_{(r,s,a)\in[0,\rho]\times [0,\sigma]\times[0,T'],t\in[a,T]}$ such that
\begin{itemize}
\item $G^\pm[r,s,a,a]=G_0^\pm[r,s,a],$

\item $G^\pm[r,s,a,t]$ is admissible with contact angle $\beta\pm s,$ 

\item 
\begin{equation}\label{forced_curva}
v_{G^\pm[r,s,a,t]}(x) = - \kappa_{G^\pm[r,s,a,t]}(x) \pm s\quad\text{for $t\in (a,T)$ and $x\in \pOmega G^\pm[r,s,a,t].$}
\end{equation}
\end{itemize}
Furthermore, 
\begin{itemize}
\item[\rm(a)] $G^\pm[0,0,a,t]=E[t]$ for all $t\in[a,T];$

\item[\rm(b)] there exists an increasing continuous function $g:[0,+\infty)\to[0,+\infty)$ with $g(0)=0$ such that 
$$
\max_{x\in \pOmega G^\pm[0,s,a,t]}\,\,\dist(x,\pOmega G^\pm[0,0,a,t]) \le g(s)
$$
for all $s\in[0,\sigma],$ $a\in[0,T]$ and $t\in[0,T];$

\item[\rm(c)] there exists $t^*\in (0,\rho/64)$ (independent of $r,s$ and $a$) such that 
\begin{equation}\label{military_conflict}
G_0^+[\rho/2,s,a] \subset G^+[\rho,s,a,a+t']
\quad 
\text{and}\quad 
G_0^-[\rho/2,s,a] \supset G^-[\rho,s,a,a+t']
\end{equation}
for all $t'\in [0,t^*]$ with $a+t'\le T.$
\end{itemize}
 
\end{theorem}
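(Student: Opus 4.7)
The plan is to decompose the statement into a core well-posedness claim and three corollaries obtained by continuous dependence on parameters. For the first part, existence and uniqueness of $\{E[t]\}_{t\in[0,T^\dag)}$ with the stated regularity is essentially \cite[Theorem B.1]{BKh:2018}, and I would simply invoke it after noting that the contact angle $\beta\in C^{1+\alpha}$ with $\|\beta\|_\infty\le 1-2\eta$ is exactly the admissible hypothesis there. The maximal time $T^\dag$ is defined in the standard way as the supremum over existence times of classical solutions parametrized over $\cl{\sU}$.

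For the parametric families $\{G^\pm[r,s,a,t]\}$, I would fix $(r,s,a)\in[0,\rho]\times[0,\sigma]\times[0,T']$ and apply the same short-time existence result to the \emph{forced} mean curvature flow \eqref{forced_curva} with initial datum $G_0^\pm[r,s,a]$ (which by Corollary \ref{cor:time_foliations} is admissible with contact angle $\beta\pm s$). After parametrizing $\pOmega G^\pm[r,s,a,t]$ as a normal graph over $\pOmega G_0^\pm[r,s,a]$, the evolution becomes a quasilinear parabolic PDE on the fixed reference surface $\sU$ with an oblique (Robin-type) boundary condition encoding the contact angle $\beta\pm s$ and a bounded forcing term $\pm s$. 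Short-time existence and $C^{2+\alpha,1+\alpha/2,2+\alpha}$-regularity of the solution is standard Schauder theory (as in \cite{BKh:2018}); moreover, by the same linearized parametric Schauder estimates — applied to the joint parameter $(r,s,a)$, which enters only through the initial datum, the boundary datum, and the forcing, all in a $C^{2+\alpha,2+\alpha}$ manner by Corollary \ref{cor:time_foliations} — the solution is jointly $C^{2+\alpha,2+\alpha}$ in $(r,s,a)$. This gives the admissibility of the joint family. Uniqueness of $G^\pm$ follows from the uniqueness part of the same short-time existence theorem for each fixed $(r,s,a)$.

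Property (a) is immediate: when $(r,s)=(0,0)$, the evolution \eqref{forced_curva} reduces to \eqref{mce_main} with initial datum $G_0^\pm[0,0,a]=E[a]$ and contact angle $\beta$, so by the uniqueness of $\{E[\cdot]\}$ already established, $G^\pm[0,0,a,t]=E[t]$ for $t\in[a,T]$. Property (b) is a continuity-in-parameter assertion: by the joint $C^{2+\alpha,2+\alpha}$-regularity of the parametrization $p^\pm$ of $\pOmega G^\pm[r,s,a,t]$ in $(r,s,a,t)$ on the compact set $[0,\rho]\times[0,\sigma]\times[0,T']\times[0,T]$, the map $s\mapsto \pOmega G^\pm[0,s,a,t]$ is uniformly continuous in the Hausdorff distance, uniformly in $(a,t)$. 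Together with (a), this produces the modulus of continuity $g$ with $g(0)=0$ as required.

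Property (c) requires the only genuinely nontrivial estimate. Since the parametrization $p^\pm$ is $C^{2+\alpha,1+\alpha/2,2+\alpha}$ on the compact set of parameters, the normal velocity $v_{G^\pm[\rho,s,a,\cdot]}$ is uniformly bounded by some constant $M>0$ independent of $(r,s,a)$. The gap between $\pOmega G_0^\pm[\rho/2,s,a]$ and $\pOmega G_0^\pm[\rho,s,a]$ is exactly $\rho/2$ by Corollary \ref{cor:time_foliations}(a), so setting $t^*:=\min\{\rho/(4M),\rho/65\}$ and using the fundamental theorem of calculus along normals,
\begin{equation*}
\dist\bigl(\pOmega G^\pm[\rho,s,a,a+t'],\pOmega G_0^\pm[\rho,s,a]\bigr) \le M t' \le \rho/4
\end{equation*}
for $t'\in[0,t^*]$. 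Combined with the gap of $\rho/2$ and the correct sign of motion (outer evolution stays outside $G_0^+[\rho/2,s,a]$ and inner evolution stays inside $G_0^-[\rho/2,s,a]$, which is guaranteed by the nested structure of the foliation at $t=a$ and the continuity of $t\mapsto G^\pm[\rho,s,a,t]$), this yields \eqref{military_conflict}. The main obstacle is ensuring that the Schauder estimates, and therefore $M$ and the modulus $g$, are truly uniform in $(r,s,a)$; this is handled by exploiting that the equation and boundary data depend $C^{2+\alpha}$-smoothly on these parameters over a compact set, so the standard parametric parabolic theory gives the required uniform constants.
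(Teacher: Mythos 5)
Your proposal is correct and follows essentially the same route as the paper, which simply cites \cite[Theorem B.1]{BKh:2018} for solvability of \eqref{mce_main}, derives solvability of \eqref{forced_curva} from well-posedness plus smooth dependence on the data, and states that (a)--(c) follow from smooth dependence of $G^\pm$ on $[r,s,a,t]$; your proof fleshes out exactly what that smooth dependence means (parametric Schauder theory on a fixed reference domain with oblique boundary condition, uniform velocity bound giving $t^*$) and is a faithful expansion of the paper's terse argument. One small point worth making explicit: to obtain the families $G^\pm[r,s,a,\cdot]$ on the full interval $[a,T]$ rather than just for a short time, you implicitly need the hypothesis in the theorem that $\rho$ and $\sigma$ may be decreased slightly (depending on $\{E(t)\}$), since for $(r,s)$ small the maximal existence time of the perturbed forced flow is only known to be close to $T^\dag-a$ by continuous dependence, and decreasing $\rho,\sigma$ ensures it exceeds $T-a$ uniformly.
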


Thus, $\{G^\pm[r,s,a,\cdot]\}$ is a mean curvature flow starting from $G_0^\pm[r,s,a]$ and with forcing $s$ and contact angle $\beta\pm s.$

\begin{proof}
The solvability of \eqref{mce_main} follows from \cite[Theorem B.1]{BKh:2018} and the solvability of \eqref{forced_curva} follows from the well-posedness of \eqref{mce_main} together with the  smooth dependence on the initial datum (see also \cite[Theorem 7.1]{ATW:1993} in the case without boundary). Finally, the assertions (a)-(c) follow  from the smooth dependence of $G^\pm$ on $[r,s,a,t].$
\end{proof}

By Proposition \ref{prop:regular_distance} and the regularity of $G^\pm[r,s,a,\cdot]$ in time,  \eqref{forced_curva} can be rewritten as
\begin{equation}\label{never_ever}
\tfrac{\p}{\p t}\,\sd_{G^\pm[r,s,a,t]}(x) = -\kappa_{G^\pm[r,s,a,t]}(x) \pm s\qquad \text{for $t\in(a,T)$ and $x\in \pOmega  G^\pm[r,s,a,t].$}
\end{equation}

\begin{proposition}\label{prop:time_regular_sdist}
For any $s\in(0,\sigma]$ there exists $\tau_0(s)>0$ such that for any $r\in[0,\rho],$ $a\in[0,T),$ $\tau\in(0,\tau_0)$ and $t\in [a+\tau,T]$ 
\begin{equation}\label{kappa_chappa}
\tfrac{\sd_{G^+[r,s,a,t - \tau]}(x)}{\tau} > -\kappa_{G^+[r,s,a,t]}(x) +\tfrac{s}{2},\quad x\in \pOmega G^+[r,s,a,t],
\end{equation}
and
\begin{equation*} 
\tfrac{\sd_{G^-[r,s,a,t - \tau]}(x)}{\tau} < -\kappa_{G^-[r,s,a,t]}(x) - \tfrac{s}{2},\quad x\in \pOmega G^+[r,s,a,t].
\end{equation*}
\end{proposition}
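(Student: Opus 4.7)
The plan is a first-order Taylor expansion in time of the signed distance at a boundary point, using the forced-flow equation from Theorem \ref{teo:short_time}. Fix $s \in (0, \sigma]$, $(r, a, t)$ with $a < t \le T$, and $x \in \pOmega G^+[r,s,a,t]$, so that $\sd_{G^+[r,s,a,t]}(x) = 0$. By the admissibility of the family $\{G^+[r,s,a,\cdot]\}$ in the sense of Definition \ref{def:admissible_sets}(d), combined with Proposition \ref{prop:regular_distance}(c), the function $t' \mapsto f(t') := \sd_{G^+[r,s,a,t']}(x)$ is $C^{1+\alpha/2}$ in a neighborhood of $t' = t$, with derivative at $t' = t$ prescribed by \eqref{never_ever}. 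A direct Taylor expansion then yields
\begin{equation*}
\sd_{G^+[r,s,a,t-\tau]}(x) \;=\; \tau\bigl(-\kappa_{G^+[r,s,a,t]}(x) + s\bigr) \;+\; R(r,a,t,x,\tau),
\end{equation*}
where the leading term is $\tau$ times the outward normal velocity prescribed by \eqref{forced_curva} and the remainder $R$ is controlled by the H\"older modulus of $f'$ on a small interval around $t$.

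The key technical step is to show $|R| \le C(s)\,\tau^{1+\alpha/2}$ with $C(s)$ independent of $(r, a, t, x)$. This follows from the joint $C^{2+\alpha,1+\alpha/2,2+\alpha}$-regularity of the parametrization of the admissible family (taking $Q = [0,\rho] \times \{(a,t) : 0 \le a \le t \le T\}$ with $s$ fixed in Definition \ref{def:admissible_sets}(d)), combined with compactness of this parameter set and of $\cl{\sU}$. In particular, the time-H\"older seminorm of $f'$ near $t$ is uniformly bounded across $(r, a, t)$ and $x \in \pOmega G^+[r,s,a,t]$, yielding a single constant $C(s)$.

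Once $C(s)$ is fixed, choose $\tau_0(s) := (s/(2C(s)))^{2/\alpha}$, so that $C(s)\,\tau^{\alpha/2} < s/2$ for every $\tau \in (0, \tau_0(s))$. Dividing the Taylor identity above by $\tau$ gives
\begin{equation*}
\frac{\sd_{G^+[r,s,a,t-\tau]}(x)}{\tau} \;\ge\; -\kappa_{G^+[r,s,a,t]}(x) + s - C(s)\,\tau^{\alpha/2} \;>\; -\kappa_{G^+[r,s,a,t]}(x) + \tfrac{s}{2},
\end{equation*}
which is the claimed inequality \eqref{kappa_chappa}. The analogous estimate for $G^-$ follows by the same argument with forcing $-s$ in place of $+s$, reversing the strict inequality. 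The principal obstacle is parameter-uniform control of the remainder $R$: the Taylor calculation at a single point is elementary, but the statement requires one $\tau_0(s)$ that works simultaneously for all $(r, a, t, x)$, and this uniformity rests entirely on the parameter-uniform smoothness built into the admissible-family framework.
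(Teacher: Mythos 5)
Your strategy is the same as the paper's: expand $t'\mapsto f(t'):=\sd_{G^+[r,s,a,t']}(x)$ to first order in time at a fixed point $x\in\pOmega G^+[r,s,a,t]$, identify the leading coefficient through \eqref{forced_curva}/\eqref{never_ever}, and absorb the remainder into $s/2$ by choosing $\tau_0(s)$ uniformly in $(r,a,t,x)$ via compactness. The problem is that the step you dispose of in one sentence --- that $f$ is differentiable in $t'$ near $t$ with a modulus for $f'$ that is uniform in all parameters --- is precisely the nontrivial content of the proposition, and the tools you cite do not deliver it. Proposition \ref{prop:regular_distance}(c) gives only \emph{spatial} regularity of $\sd_E$ for a fixed set near an interior boundary point, and \eqref{never_ever} prescribes $\p_t\sd$ only at points lying on $\pOmega G^+[r,s,a,t']$ at that instant; for the Taylor/mean-value remainder you need the time derivative at nearby times $t'$, when $x$ is off $\pOmega G^+[r,s,a,t']$, and --- worse --- when $x$ is near the contact line its nearest point on $\cl{\pOmega G^+[r,s,a,t']}$ may lie on $\p\Omega\cap\cl{\pOmega G^+[r,s,a,t']}$, where the smooth normal-projection representation of the distance breaks down. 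The paper's proof is devoted exactly to this: it shows the nearest-point projection $\pi[r,s,a,t,y]$ onto $\cl{\Gamma[r,s,a,t]}$ is single-valued in a uniform tubular neighborhood, writes $\sd$ through the parametrization separately in the interior-projection and boundary-projection cases, and invokes the implicit function theorem at a boundary point (the Dederick reference) to get $C^1$ dependence on $t$ with $g_t$ uniformly continuous; only then does \eqref{never_ever} produce a uniform $\tau_0(s)$. Without an argument of this kind your uniform bound on the ``H\"older seminorm of $f'$'' is asserted, not proved.

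Two smaller points. First, your remainder estimate $|R|\le C(s)\,\tau^{1+\alpha/2}$ is stronger than what is needed and is not substantiated by what you wrote; the paper only establishes (and only needs) uniform continuity of the time derivative, i.e. $R=o(\tau)$ uniformly in the parameters, which already yields $\tau_0(s)$. Second, mind the sign bookkeeping: since $f(t)=0$, the expansion is $f(t-\tau)=-\tau f'(t)+R$, so the coefficient of $\tau$ is \emph{minus} the time derivative of the signed distance (the outward normal velocity); your statement that $f'(t)$ is ``prescribed by \eqref{never_ever}'' and your displayed leading term $\tau(-\kappa_{G^+}+s)$ are consistent only after fixing this convention, which you should make explicit rather than inherit implicitly from \eqref{never_ever}.
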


\begin{proof}
We prove the assertion only for $G^+.$ Let 
$$
g(r,s,a,t,x):=\sd_{G^+[r,s,a,t]}(x),\quad t\in [a,T],\,\,x\in \Omega.
$$
By the $C^2$-regularity of $\Gamma[r,s,a,t]:=\pOmega G^+[r,s,a,t]$ (up to the boundary) as well as its smooth dependence on $r,s,a,t,$ there exists $R_0>0$ such that for any $r\in[0,\rho],$ $s\in[0,\sigma],$ $a\in[0,T],$ $t\in[a,T],$ $x\in \Gamma[r,s,a,t]$ and $y\in \Omega\cap \cl{B_{R_0}(x)}$ the projection $\pi[r,s,a,t,y]$ onto $\cl{\Gamma[r,s,a,t]}$ is a singleton. Note that if $\pi[r,s,a,t,y]\in \Gamma(t),$ then $y-\pi[r,s,a,t,y]$ is parallel to the unit normal $\nu_{\Gamma[r,s,a,t]}(\pi[r,s,a,t,y]).$ In particular, 
$$
g(r,s,a,t,y) = 
\begin{cases}
(y-\pi[r,s,a,t,y]) \cdot \nu_{\Gamma[r,s,a,t]}(\pi[r,s,a,t,y]) &  \text{if $\pi[r,s,a,t,y] \in \Gamma[r,s,a,t],$}\\
|y-\pi[r,s,a,t,y]| & \text{if $\pi[r,s,a,t,y] \in \p\Gamma[r,s,a,t].$}
\end{cases}
$$
Let $p[r,s,a,t,\cdot]:\sU\to\R^3$ be a  parametrization of $\{\Gamma[r,s,a,t]\}$ (smoothly depending on $r,s,a,t$). Then there exists a unique $u_{r,s,a,t,y}\in \cl{\sU}$ such that 
$$
\pi[r,s,a,t,y] = p[r,s,a,t,u_{r,s,a,t,y}],\quad y\in B_{R_0}(x).
$$
The uniqueness of $u_{r,s,a,t,y}$ and the regularity of the diffeomorphism $p$ as well as the implicit function theorem at boundary \cite{Dederick:1913} imply that the map $t\mapsto u_{r,s,a,t,y}$ is continuously differentiable in $t\in[a,T]$ uniformly in $r,s,a,y.$ Thus, $t\mapsto g(r,s,a,t,y)$ is also continuously differentiable in $t\in[a,T]$ and the map $t\mapsto g_t(r,s,a,t,y)$ is uniformly continuous. Then in view of \eqref{never_ever}, for any $s\in(0,\sigma]$ there exists $\tau_0(s)>0$ for which  \eqref{kappa_chappa} holds  for any $r\in[0,\rho],$ $a\in[0,T),$ $\tau\in(0,\tau_0)$ and $t\in [a+\tau,T].$
\end{proof}

As in the standard mean curvature flow of (compact) hypersurfaces without boundary, the smooth mean curvature flow of droplets also  enjoys comparison principles, see also \cite[Proposition B.4]{BKh:2018}.

\begin{theorem}[\textbf{Strong comparison}]\label{teo:comparison}
Let $\{E_1(t)\}_{t\in [0,T^\dag)}$ and $\{E_2(t)\}_{t\in [0,T^\dag)}$ be smooth flows with forcing $s_1$ and $s_2$ and contact angles $\beta_1$ and $\beta_2,$ respectively. Assume that $E_1(0)\prec E_2(0),$ $s_1 \le s_2$ and $\beta_1>\beta_2$ on $\p\Omega.$ Then $E_1(t)\prec E_2(t)$ for all $t\in [0,T^\dag).$
\end{theorem}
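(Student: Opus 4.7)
The plan is a first-touching contradiction argument, in the spirit of the smooth comparison principle \cite[Proposition B.4]{BKh:2018}. First I would set
$$
T^* := \sup\bigl\{t \in [0, T^\dag) : E_1(\tau) \prec E_2(\tau) \text{ for every } \tau \in [0, t]\bigr\},
$$
which is positive by the hypothesis $E_1(0) \prec E_2(0)$ and the $C^{2+\alpha}$-in-time smoothness of $\pOmega E_i.$ Assuming $T^* < T^\dag$ for contradiction, continuity yields $E_1(T^*) \subseteq E_2(T^*)$ and $\dist(\pOmega E_1(T^*), \pOmega E_2(T^*)) = 0,$ so by compactness of $\cl{\pOmega E_i(T^*)}$ there exists a touching point $x^* \in \cl{\pOmega E_1(T^*)} \cap \cl{\pOmega E_2(T^*)}.$ I would then split into the interior case $x^* \in \Omega$ and the boundary case $x^* \in \p\Omega.$

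In the interior case, $C^{2+\alpha}$-smoothness of $\pOmega E_i(T^*)$ together with the one-sided contact from $E_1 \subset E_2$ forces a common tangent plane and a common outward unit normal at $x^*.$ Representing $\pOmega E_i(t)$ locally as graphs $u_i(t,\cdot)$ over that tangent plane (oriented so that $\psi := u_2 - u_1 \ge 0$), the flow equation $v_i = -\kappa_i + s_i$ becomes a quasilinear parabolic PDE $\partial_t u_i = \mathcal{M}(\nabla u_i, \nabla^2 u_i) + s_i\sqrt{1+|\nabla u_i|^2}.$ Linearizing by the mean value theorem, $\psi$ satisfies a uniformly parabolic inequality of the form
$$
\partial_t \psi - L\psi - b\cdot\nabla\psi - c\psi \ge (s_2 - s_1)\sqrt{1+|\nabla u_1|^2} \ge 0,
$$
so $\psi$ is a nonnegative parabolic supersolution with interior zero at $(T^*, x^*);$ Nirenberg's parabolic strong maximum principle forces $\psi \equiv 0$ in a backward parabolic neighborhood, which contradicts $E_1(t) \prec E_2(t)$ for $t < T^*.$

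For the boundary case $x^* \in \p\Omega,$ I would parametrize $\pOmega E_1(t)$ near $x^*$ as a normal graph $\phi(t,\cdot)$ over $\pOmega E_2(t)$ on a fixed reference domain $\sU$ as in Definition \ref{def:admissible_sets}, so that $\phi \le 0$ with $\phi(T^*, u^*) = 0$ at the parameter $u^*\in\p\sU$ corresponding to $x^*.$ The flow equations give a quasilinear parabolic PDE for $\phi$ supplemented by an oblique boundary condition on the portion of $\p\sU$ corresponding to the contact line, which encodes $\nu_{E_i}\cdot\be_3 = -\beta_i.$ The strict hypothesis $\beta_1 > \beta_2$ makes this oblique datum strictly signed at $u^*$: equivalently, on $\Omega$ near $x^*,$
$$
\partial_{\be_3}\bigl(\sd_{E_1(T^*)} - \sd_{E_2(T^*)}\bigr)(x^*) = -\beta_1(x^*) + \beta_2(x^*) < 0,
$$
whereas a Hopf--Oleinik parabolic boundary lemma for oblique boundary conditions would force the opposite (strict) sign at a boundary zero of the nonnegative supersolution $\phi.$ This is the contradiction that closes the argument. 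The main obstacle is precisely this boundary case: the signed distance is not globally smooth off $\pOmega E_i$ and does not satisfy a standalone PDE, so one must work on a fixed parametrization and invoke a boundary strong maximum principle adapted to the strict contact-angle condition, whereas the interior case is essentially a classical application of the parabolic strong maximum principle.
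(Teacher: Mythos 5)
Your argument is correct and shares the paper's overall skeleton (first touching time, contradiction at a touching point, with the boundary touching excluded by the strict inequality $\beta_1>\beta_2$), but the mechanism at an interior touching point is genuinely different. The paper argues pointwise: at the first contact point $x_0\in\Omega$ the inclusion gives $\kappa_{E_1}\ge\kappa_{E_2}$, hence $v_{E_1}-v_{E_2}\le0$ there, and then the Hamilton trick (differentiating the infimum of the distance between $\pOmega E_1(t)$ and $\pOmega E_2(t)$, cf.\ \cite{Mantegazza:2011}) shows this distance is nondecreasing shortly before the contact time, contradicting that it tends to zero; your route instead writes both surfaces as graphs over the common tangent plane, linearizes, and invokes the parabolic strong maximum principle to force local coincidence backward in time, contradicting $E_1(t)\prec E_2(t)$ for $t<T^*$. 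Both are standard and sound; yours requires more setup (uniform graph representation on a backward cylinder, bounded coefficients for the linearized operator) but yields the stronger conclusion of local coincidence, while the paper's is shorter at the price of leaning on the Hamilton trick. In the boundary case your key computation, namely that $w:=\sd_{E_1(T^*)}-\sd_{E_2(T^*)}\ge0$ in $\Omega$, $w(x^*)=0$, while the one-sided derivative along $\be_3$ equals $\beta_2(x^*)-\beta_1(x^*)<0$, already gives the contradiction by itself: a nonnegative function vanishing at $x^*$ must have nonnegative one-sided derivative in any direction entering $\Omega$. So the appeal to a Hopf--Oleinik oblique-derivative lemma is superfluous (it would only be needed if the angle inequality were not strict), and this elementary first-order observation is exactly how the paper disposes of boundary touching; just make sure to justify the one-sided differentiability of $\sd_{E_i}$ along $\be_3$ at the contact-line point, which follows from the $C^{2+\alpha}$ regularity of $\cl{\pOmega E_i(T^*)}$ up to $\p\Omega$ and the fact that $\|\beta_i\|_\infty<1$ keeps the contact angle away from $0$ and $\pi$.
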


\begin{proof}
Let $\bar t\in(0,T^\dag)$ be the first contact time of $\cl{\pOmega E_1(\cdot)}$ and $\cl{\pOmega E_2(\cdot)}.$  By the contact angle condition and the assumption $\beta_1> \beta_2,$ a contact point $x_0$ cannot be on $\p\Omega.$ Therefore, from the inclusion $E_1(\bar t)\subset E_2(\bar t)$ we find $\kappa_{E_1(\bar t)}(x_0) \ge \kappa_{E_2(\bar t)}(x_0),$ and hence, from the evolution equation and the assumption $s_1\le s_2$ we get 
$$
v_{E_1(\bar t)}(\bar t,x_0)-v_{E_2(\bar t)}(\bar t,x_0) =  -\kappa_{E_1(\bar t)}(x_0) + s_1 + \kappa_{E_2(\bar t)}(x_0) -s_2 \le0.
$$
Now using the Hamilton trick (see e.g. \cite[Chapter 2]{Mantegazza:2011}) we conclude that the distance between $\pOmega E_1(t)$ and $\pOmega E_2(t)$ is nondecreasing in $(\bar t-\epsilon,\bar t)$ for small $\epsilon>0.$ In particular, $\cl{\pOmega E_1(\bar t)}\cap \cl{\pOmega E_2(\bar t)}=\emptyset,$ a contradiction.
\end{proof}

\subsection{GMM for mean curvature flow of droplets}

Notice that the capillary Almgren-Taylor-Wang functional \eqref{eq:capillar_ATW} can be rewritten as
\begin{equation*}
\cF_\beta(E;E_0,\tau) = \cC_\beta(E,\Omega)+\frac{1}{\tau}\int_{E} \sd_{E_0}\,dx-\frac{1}{\tau}\int_{E_0} \sd_{E_0}\,dx.
\end{equation*}

Let us recall some properties of $\cF_\beta$ and its minimizers from \cite{BKh:2018}.

\begin{theorem}\label{teo:propertie_minimizers}
Let $E_0\in BV(\Omega;\{0,1\})$ be bounded, $\tau>0$ and $\beta\in L^\infty(\p\Omega)$ satisfy \eqref{beta_coercive}.

\begin{itemize}
\item[\rm(a)] The functional $\cF_\beta(\cdot;E_0,\tau)$ is $L^1(\Omega)$-lower semicontinuous.

\item[\rm(b)] There exists a minimizer $E_\tau$ of $\cF_\beta(\cdot;E_0,\tau)$ and every minimizer of $\cF_\beta(\cdot;E_0,\tau)$ is bounded.

\item[\rm(c)] There exists a bounded set $E_+$ containing $E_0$ such that for any $F_0\subset E_+$ the minimizer of $\cF_\beta(\cdot;F_0,\tau)$ is a subset of $E_+.$

\item[\rm(d)] There exists $\vartheta\in(0,1/2)$ depending only on $\eta$ such that for any minimizer $E_\tau$ of $\cF_\beta(\cdot;E_0,\tau)$
\begin{equation}\label{L_checkisz_abbaho}
\sup_{x\in\cl{E_\tau \Delta E_0}}\,\d_E(x)\le \tfrac{1}{\vartheta}\,\sqrt\tau.
\end{equation}
Moreover, for any ball $B_r(x)$ centered at $x\in\cl{\Omega},$
$$
P(E_\tau,B_r(x)) \le \tfrac{1}{\vartheta}\,r^2,\quad r>0,
$$
and for any ball $B_r(x)$ centered at $x\in \p E_\tau$
\begin{gather*} 
\vartheta \le \tfrac{|B_r(x)\cap E_\tau|}{|B_r(x)|}\le 1-\vartheta
\qquad \text{and}\qquad 
P(E_\tau,B_r(x)) \ge \vartheta r^2
\end{gather*}
whenever $r\in (0,\vartheta \sqrt\tau).$ In particular, $E_\tau$ can be assumed open and $\cH^2(\p E_\tau\setminus \p^*E_\tau)=0.$

\item[\rm(e)] There exist unique minimal and maximal minimizers $E_{\tau*}$ and $E_\tau^*$ of $\cF_\beta(\cdot;E_0,\tau)$ such that $E_{\tau*}\subset E_\tau\subset E_\tau^*$ for any minimizer $E_\tau.$

\item[\rm(f)] If $F_0\subset E_0,$ then for any minimizers $F_\tau$ and $E_\tau$ of $\cF_\beta(\cdot;F_0,\tau)$ and $\cF_\beta(\cdot;E_0,\tau)$ one has $F_\tau\subset E_{\tau}^*$ and $F_{\tau*}\subset E_\tau,$ where $F_{\tau*}$ and $E_\tau^*$ are the minimal and maximal minimizers of  $\cF_\beta(\cdot;F_0,\tau)$ and $\cF_\beta(\cdot;E_0,\tau)$, respectively.

\item[\rm(g)] Let $\beta$ be $C^1$ on $\p\Omega,$ $E_\tau$ be a  minimizer of $\cF_\beta(\cdot; E_0,\tau)$ and $\Gamma:=\pOmega E_\tau.$ Then by \cite[Theorem 1.5]{DPhM:2017} and the standard regularity theory for minimizers of the presribed curvature functional, $\Gamma$ is a $C^{2+\gamma}$-hypersurface with boundary (for some $\gamma\in (0,1]$) and $\nu_{E_\tau}\cdot \be_3 = -\beta$ on $\p\Gamma.$
\end{itemize}
\end{theorem}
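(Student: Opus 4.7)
The plan is to observe that parts (a)--(f) follow by standard direct-method and density-estimate arguments for perimeter-type functionals, with the capillary term giving only boundary corrections that are controlled by \eqref{beta_coercive}, while (g) is an application of the De Philippis--Maggi regularity theorem combined with standard elliptic bootstrapping; all of this is essentially contained in \cite{BKh:2018}, and I would proceed as follows.

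For (a), I would note that $P(\cdot,\Omega)$ is $L^1$-lower semicontinuous on $BV(\Omega;\{0,1\})$ and the boundary trace $\chi_E\big|_{\p\Omega}$ is continuous along $L^1$-converging sequences with uniformly bounded perimeter; under \eqref{beta_coercive} the full capillary functional $\cC_\beta$ is lsc and coercive (it controls $P$ up to a factor $2\eta$ by the trace inequality on $\Omega$). The volume term $\frac{1}{\tau}\int_E \sd_{E_0}\,dx$ is continuous under $L^1$-convergence since $\sd_{E_0}\in L^\infty_{\mathrm{loc}}$. For (b), I would apply the direct method: using $E_0$ itself as a competitor gives a uniform bound on $\cC_\beta(F_n,\Omega)$ along a minimizing sequence $F_n$; by coercivity I extract an $L^1$-convergent subsequence and conclude by (a). Boundedness of minimizers follows from (c), which I would obtain by a competitor argument: cutting off a minimizer outside a sufficiently large ball (chosen using the $\sqrt\tau$-displacement bound in (d) applied a posteriori, or directly via isoperimetric inequality together with the strict positivity of $\sd_{E_0}$ far from $E_0$) strictly decreases $\cF_\beta$.

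For (d), the key is a standard interior/boundary density-estimate argument à la De Giorgi, adapted to the capillary setting as in \cite{BKh:2018}. Given a minimizer $E_\tau$ and a ball $B_r(x)$ with $r<\vartheta\sqrt\tau$, the competitors $E_\tau\cup B_r(x)$ and $E_\tau\setminus B_r(x)$ give, after cancellation of the volume terms against the capillary terms (using \eqref{beta_coercive} to control the boundary contribution on $\p\Omega$ by $(1-2\eta)P(B_r(x),\p\Omega)$), inequalities of the form $P(E_\tau,B_r(x))\le C\,\cH^2(\p B_r(x)\cap E_\tau^c)+C\tau^{-1}r^3$; combined with the isoperimetric inequality for $|B_r(x)\cap E_\tau|$, this yields both the upper perimeter bound and the two-sided density bounds for an appropriate $\vartheta=\vartheta(\eta)$. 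The estimate \eqref{L_checkisz_abbaho} is then obtained by iterating the density bound along a maximal segment realizing $\d_{E_0}(x)$ on $\p^*E_\tau$. The main technical obstacle here, which is where the presence of $\p\Omega$ genuinely matters, is making sure that the boundary capillary contribution never reverses the sign of the comparison; this is precisely where \eqref{beta_coercive} is used.

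For (e), I would exploit the submodularity $\cF_\beta(E\cup F;E_0,\tau)+\cF_\beta(E\cap F;E_0,\tau)\le\cF_\beta(E;E_0,\tau)+\cF_\beta(F;E_0,\tau)$, which holds because both the perimeter and the capillary term are submodular and the volume term is additive; hence the family of minimizers is a lattice, and compactness from (b) and (c) (all minimizers lie in the bounded set $E_+$) yields unique minimal/maximal elements $E_{\tau*}$ and $E_\tau^*$. Part (f) is an immediate consequence: using $F_\tau\cap E_\tau^*$ and $F_\tau\cup E_\tau^*$ as competitors and invoking the submodular inequality for $\cF_\beta(\cdot;F_0,\tau)$ and $\cF_\beta(\cdot;E_0,\tau)$ separately gives the claimed inclusions (one uses $F_0\subset E_0$ to compare the forcing terms $\sd_{F_0}$ and $\sd_{E_0}$). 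Finally, for (g), the interior regularity of $\pOmega E_\tau$ is the classical $C^{1,\gamma}$-regularity for $\Lambda$-minimizers of the perimeter with bounded prescribed curvature (here $\Lambda=\|\sd_{E_0}\|_\infty/\tau$); the boundary contact regularity is exactly the result of De Philippis--Maggi \cite[Theorem 1.5]{DPhM:2017}, which gives $C^{1,\gamma}$ up to $\p\Omega$ and the Young condition $\nu_{E_\tau}\cdot\be_3=-\beta$ on $\p\Gamma$; standard Schauder theory then bootstraps to $C^{2+\gamma}$ using $\beta\in C^1$.
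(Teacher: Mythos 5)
The paper does not actually prove Theorem~\ref{teo:propertie_minimizers}: items (a)--(f) are quoted from \cite{BKh:2018} (the text explicitly says ``Let us recall some properties of $\cF_\beta$ and its minimizers from \cite{BKh:2018}''), and item (g) is credited to \cite[Theorem 1.5]{DPhM:2017} together with standard Schauder bootstrap. Your sketch reproduces the right strategy at the level of structure --- direct method for (a)--(b), cut-off competitors for (c), De Giorgi density estimates for (d), submodularity of $\cF_\beta$ for the lattice property (e)--(f), and De Philippis--Maggi boundary regularity plus bootstrap for (g) --- so it is broadly consistent with what is done in that reference.

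There is, however, a genuine error in your argument for (a): the trace operator $E\mapsto\chi_E|_{\p\Omega}$ is \emph{not} continuous from $L^1(\Omega)$-convergence with uniformly bounded perimeter into $L^1(\p\Omega)$. In $\Omega=\R^2\times(0,\infty)$ take $E_n := (B_1(0)\cap\Omega)\setminus\big(\R^2\times(0,\tfrac1n)\big)$: then $E_n\to E:=B_1(0)\cap\Omega$ in $L^1(\Omega)$ with $\sup_n P(E_n,\Omega)<\infty$, yet $\chi_{E_n}|_{\p\Omega}\equiv0$ while $\chi_E|_{\p\Omega}$ is the indicator of a disk. So the trace term $\int_{\p\Omega}\beta\chi_E\,d\cH^2$ is, on its own, neither lower nor upper semicontinuous under this convergence, and the chain ``$P$ lsc + trace continuous $\Rightarrow$ $\cC_\beta$ lsc'' does not hold. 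The correct fact is that the \emph{sum} $\cC_\beta(E,\Omega)=P(E,\Omega)+\int_{\p\Omega}\beta\chi_E\,d\cH^2$ is $L^1$-lsc precisely because $\|\beta\|_\infty<1$: any trace mass that appears or disappears along a sequence is accompanied by at least as large a change in relative perimeter near $\p\Omega$, and the strict bound on $|\beta|$ ensures the combination cannot drop in the limit. (For constant $\beta\ge0$ one sees this by writing $\cC_\beta(E)=(1-\beta)P(E,\Omega)+\beta\big[P(E,\Omega)+\int_{\p\Omega}\chi_E\,d\cH^2\big]$ and noting that the bracket equals $P(\tilde E,\R^3)$ for the trivial extension $\tilde E$ of $E$ across $\p\Omega$, which is lsc; variable $\beta$ of either sign requires a localized version of this.) So in (a) you must invoke the semicontinuity of $\cC_\beta$ as a whole, not a continuity of the trace --- and note that \eqref{beta_coercive} is being used for lsc itself, not only for coercivity. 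The remaining parts (b)--(g) of your sketch are fine as outlines of the argument carried out in \cite{BKh:2018} and \cite{DPhM:2017}.
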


\noindent 
Using the statements (a)-(d) in Theorem \ref{teo:propertie_minimizers} we can establish

\begin{theorem}[\textbf{Existence of GMM \cite{BKh:2018}}]\label{teo:existence_GMM}
For any bounded $E_0\in BV(\Omega;\{0,1\})$ the $GMM(\cF_\beta,E_0)$ is nonempty. Moreover, there exists $C>0$ such that every  $E(\cdot)\in GMM(\cF_\beta,E_0)$ is bounded uniformly in time and satisfies 
$$
|E(s)\Delta E(t)|\le C|t-s|^{1/2},\quad t,s>0.
$$
If, additionally, $|\p E_0|=0,$ then this inequality holds for all $s,t\ge0.$ 
\end{theorem}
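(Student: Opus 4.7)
The plan is to run the classical Almgren--Taylor--Wang scheme, using Theorem \ref{teo:propertie_minimizers} as the toolbox.

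\textbf{Step 1 (Flat flows and spatial containment).} For each $\tau>0$ I would set $E(\tau,0):=E_0$ and recursively choose $E(\tau,k)$ to be the maximal minimizer of $\cF_\beta(\cdot;E(\tau,k-1),\tau)$, whose existence is supplied by parts (b) and (e) of Theorem \ref{teo:propertie_minimizers}. Applying part (c) to $E_0$ produces a bounded $E_+\supset E_0$ absorbing the whole iteration: by induction $E(\tau,k)\subset E_+$ for every $k\ge 0$ and every $\tau>0$, which is the uniform spatial bound asserted in the statement.

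\textbf{Step 2 ($1/2$-H\"older estimate at the discrete level).} Testing the minimality against the competitor $E(\tau,k-1)$ gives
\begin{equation*}
\cC_\beta(E(\tau,k),\Omega)+\frac{1}{\tau}\int_{V_k}\d_{E(\tau,k-1)}\,dx\le \cC_\beta(E(\tau,k-1),\Omega),\qquad V_k:=E(\tau,k)\Delta E(\tau,k-1).
\end{equation*}
Thus $k\mapsto \cC_\beta(E(\tau,k),\Omega)$ is nonincreasing, and together with the coercivity \eqref{beta_coercive} and the spatial bound from Step~1 this yields a uniform perimeter bound $P(E(\tau,k),\Omega)\le C$. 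To convert this into a quadratic control of $|V_k|$, I would write, for any $a>0$,
\begin{equation*}
|V_k|\le \frac{1}{a}\int_{V_k}\d_{E(\tau,k-1)}\,dx+|V_k\cap\{\d_{E(\tau,k-1)}<a\}|,
\end{equation*}
and bound the second term by a Minkowski-type inequality $|V_k\cap\{\d_{E(\tau,k-1)}<a\}|\le CaP(E(\tau,k-1),\Omega)$, which is available thanks to the uniform density and perimeter estimates of Theorem \ref{teo:propertie_minimizers}(d). Optimising in $a$ gives $|V_k|^2\le C\tau\bigl(\cC_\beta(E(\tau,k-1),\Omega)-\cC_\beta(E(\tau,k),\Omega)\bigr)$; summing in $k$ and applying Cauchy--Schwarz produces the telescoping bound
\begin{equation*}
|E(\tau,n)\Delta E(\tau,m)|\le \sum_{k=m+1}^n |V_k|\le \sqrt{n-m}\,\Bigl(\sum_k |V_k|^2\Bigr)^{1/2}\le C\sqrt{(n-m)\tau}.
\end{equation*}

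\textbf{Step 3 (Compactness, GMM, and behaviour at $t=0$).} Since $\{E(\tau,k)\}$ is contained in $E_+$ with uniformly bounded perimeter, BV-compactness makes $\{E(\tau,\intpart{t/\tau})\}_\tau$ precompact in $L^1$ for every fixed $t$. Combining this with the discrete H\"older bound on a countable dense set of times and extracting a diagonal subsequence $\tau_j\to 0^+$ produces an $E(\cdot)\in GMM(\cF_\beta,E_0)$ satisfying $|E(t)\Delta E(s)|\le C|t-s|^{1/2}$ for every $s,t>0$. Since $E(\tau_j,0)\equiv E_0$, the $m=0$ instance of the discrete estimate passes to the limit to yield $|E(t)\Delta E_0|\le C\sqrt t$; the extra hypothesis $|\p E_0|=0$ is precisely what guarantees that $E_0$ (with our convention $E_0=E_0^{(1)}$) is genuinely the $L^1$-limit at $t=0$, so that the H\"older estimate extends to all $s,t\ge 0$.

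The principal obstacle is the quadratic inequality $|V_k|^2\lesssim \tau\,\Delta\cC_\beta$ of Step~2: the per-step bound $|V_k|\le C\sqrt\tau$ coming directly from \eqref{L_checkisz_abbaho} is too weak to survive summation, and its improvement requires a Minkowski-type control on tubular neighbourhoods of $\pOmega E(\tau,k-1)$, where the density and perimeter estimates of Theorem \ref{teo:propertie_minimizers}(d) play an essential role. The other delicate point, the uniform perimeter bound feeding the Minkowski estimate, is what the coercivity assumption \eqref{beta_coercive} delivers in the capillary setting.
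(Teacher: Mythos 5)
Your proposal is correct and follows essentially the route the paper intends: Theorem \ref{teo:existence_GMM} is quoted from \cite{BKh:2018}, whose proof is exactly this Almgren--Taylor--Wang/Luckhaus--Sturzenhecker scheme built on the confinement, density and dissipation estimates of Theorem \ref{teo:propertie_minimizers} (a)--(d), with the dissipation--Cauchy--Schwarz argument giving the $1/2$-H\"older bound and $|\p E_0|=0$ handling $t=0$. The only details left implicit (the Minkowski-type bound is available only at scales $a\lesssim \vartheta\sqrt\tau$ and only for steps $k\ge2$, since $E_0$ itself is not a minimizer) are resolved precisely as you indicate, so the sketch matches the cited proof.
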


\subsection{Comparison of flat flows with truncated balls}

The main result of this section is the following proposition.

\begin{theorem}\label{teo:compare_with_ball}
Let $E_0\subset\Omega$ be a bounded set of finite perimeter, $\beta\in L^\infty(\p\Omega)$ satisfy \eqref{beta_coercive} and $p\in {\Omega}$ with $R:=\dist(p,\pOmega E_0).$ For $\tau>0$ let $\{E(\tau,k)\}$ be flat flows starting from $E_0.$ 
Then for any $\beta_0\in (\|\beta\|_\infty\},1)$  
\begin{align}
\Omega\cap B_{R_0}(p)\subset E_0 \qquad & \Longrightarrow \qquad \Omega\cap B_{\frac{\beta_0 R_0}{16}}(p)\subset E(\tau,k), \label{stay_inside}\\
B_{R_0}(p)\cap E_0 = \emptyset \qquad & \Longrightarrow \qquad B_{\frac{\beta_0 R_0}{16}}(p)\cap E(\tau,k) = \emptyset \label{stay_outside}
\end{align}
whenever $0<\tau < \vartheta_0 R_0^2$ and $0\le k\tau \le \vartheta_0R_0^2,$ where $\vartheta_0\in(0,1)$ is a constant depending only on $\beta_0$ and $\vartheta.$
\end{theorem}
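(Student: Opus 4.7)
I sketch the proof of the inner inclusion \eqref{stay_inside}; the outer one \eqref{stay_outside} follows by a symmetric argument applied to the complement $\Omega \setminus E(\tau, k)$. The strategy adapts the ball-barrier technique of \cite{ATW:1993}, with \emph{Winterbottom shapes} (truncated balls with a prescribed contact angle) replacing the usual balls to accommodate the capillary boundary condition.

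\textbf{Setup.} Fix $\beta_0 \in (\|\beta\|_\infty, 1)$. For $z_0 \in \partial\Omega$ and $R > 0$, let
\[
W(z_0, R) := \Omega \cap \overline{B_R(z_0 + \beta_0 R\,\be_3)},
\]
a spherical cap meeting $\partial\Omega$ at the constant contact angle $\arccos\beta_0$; for $z_0 \in \Omega$ with $\dist(z_0, \partial\Omega) \ge R$, set instead $W(z_0, R) := B_R(z_0)$. By the Winterbottom theorem (see e.g.\ \cite{Maggi:2012,Kholmatov:2024}), these sets are the isoperimetric minimizers of $\cC_{\beta_0}(\cdot, \Omega)$ at fixed volume. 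From the hypothesis $\Omega \cap B_{R_0}(p) \subset E_0$, a short case analysis on $\dist(p, \partial\Omega)$ produces a base point $z_0 \in \overline{\Omega}$ and a radius $R_*$, comparable to $R_0$ up to a factor depending only on $\beta_0$, such that
\[
W(z_0, R_*) \subset \Omega \cap B_{R_0}(p) \quad\text{and}\quad \Omega \cap B_{\beta_0 R_0/16}(p) \subset W(z_0, R_*/4).
\]

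\textbf{Inductive barrier.} Define $R_k := (R_*^2 - c_0 k\tau)^{1/2}$ for a constant $c_0 = c_0(\beta_0, \vartheta) > 0$ to be fixed. I prove by induction on $k$ that $W(z_0, R_k) \subset E(\tau, k)$ as long as $R_k \ge R_*/4$. The base case is the initialization above. For the inductive step, with $W := W(z_0, R_k)$ and $A := W \setminus E(\tau, k)$, test the minimality of $E(\tau, k)$ against $E(\tau, k) \cup W$: the submodularity of $\cC_\beta$ together with the inductive hypothesis $W(z_0, R_{k-1}) \subset E(\tau, k-1)$ gives
\[
\cC_\beta(E(\tau, k) \cap W, \Omega) - \cC_\beta(W, \Omega) \le \tfrac{1}{\tau}\int_A \sd_{E(\tau, k-1)}\, dx \le -\tfrac{c_\beta (R_{k-1} - R_k)}{\tau}|A|,
\]
where the last inequality uses the uniform lower bound $\dist(x, \partial W(z_0, R_{k-1})) \ge c_\beta (R_{k-1} - R_k)$ for $x \in W$, with $c_\beta > 0$ depending on $\beta_0$. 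On the other hand, the Winterbottom isoperimetric inequality applied at angle $\beta_0$ (and transferred to $\cC_\beta$ via the pointwise estimate $\beta_0 - \beta \ge \beta_0 - \|\beta\|_\infty > 0$) yields
\[
\cC_\beta(W \setminus A, \Omega) \ge \cC_\beta(W, \Omega) - c_2 |A|/R_k
\]
for a constant $c_2 = c_2(\beta_0)$. Since $W \setminus A = W \cap E(\tau, k)$, combining the two inequalities forces $c_\beta (R_{k-1} - R_k)/\tau \le c_2/R_k$ whenever $|A| > 0$; choosing $c_0$ large enough in the definition of $R_k$ ensures $c_\beta R_k (R_{k-1} - R_k)/\tau > c_2$, whence $|A| = 0$.

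\textbf{Conclusion and main obstacle.} The condition $R_k \ge R_*/4$ is equivalent to $k\tau \le \vartheta_0 R_0^2$ for a suitable $\vartheta_0 = \vartheta_0(\beta_0, \vartheta) > 0$, and in that range the induction together with the initial fit gives
\[
\Omega \cap B_{\beta_0 R_0/16}(p) \subset W(z_0, R_*/4) \subset W(z_0, R_k) \subset E(\tau, k),
\]
proving \eqref{stay_inside}. The principal technical point is the Winterbottom isoperimetric comparison in the inductive step: a sharp inequality tailored to constant angle $\beta_0$ must be applied to a set $A$ near $\partial\Omega$ whose capillary boundary contribution is weighted by the \emph{nonconstant} $\beta$, and the strict gap $\beta_0 - \|\beta\|_\infty > 0$ furnished by \eqref{beta_coercive} is what absorbs this mismatch. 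The geometric case-split in the initial fit and the explicit $\beta_0$-dependence of the resulting constants are tedious but routine, and are what dictate the precise form of the admissible range $k\tau \le \vartheta_0 R_0^2$.
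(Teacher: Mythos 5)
Your argument is correct in its essentials, but it is organized differently from the paper's proof. The paper runs a two-stage scheme: it introduces an auxiliary flat flow for the \emph{constant}-angle functional $\cF_{\beta_0}$ started from the largest Winterbottom shape (or ball, in the interior case) inside $B_{R_0}(p)$, estimates how fast the largest Winterbottom shape inside each minimal minimizer can shrink by testing minimality against unions with slightly larger Winterbottom shapes and invoking the isoperimetric property \eqref{winterbottom_defos} (together with the density estimate \eqref{L_checkisz_abbaho} to get the a priori bound $\rho\ge 4\rho_0/5$), and only then transfers the conclusion to the actual flow $E(\tau,k)$ through the discrete comparison principle of Lemma \ref{lem:compare_setg}. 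You instead run a single induction in which the explicitly shrinking caps $W(z_0,R_k)$, $R_k^2=R_*^2-c_0k\tau$, serve as discrete inner barriers tested directly against the minimality of $E(\tau,k)$ for the nonconstant-$\beta$ functional; the $\beta$ versus $\beta_0$ mismatch in the trace term is absorbed because $\beta\le\|\beta\|_\infty<\beta_0$ makes the discarded wetting contribution of $A=W\setminus E(\tau,k)$ have the favorable sign (this is exactly the monotonicity that powers the hypothesis $\beta_2\ge\beta_1$ in Lemma \ref{lem:compare_setg}, and the strict gap is not really needed for it), while the dissipation bound with factor $1-\beta_0$ matches the paper's \eqref{signed_dpart}. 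What the paper's route buys is modularity (the comparison lemma is reused elsewhere, and all isoperimetric reasoning happens for the clean constant-angle energy); what yours buys is directness: no auxiliary flow, no minimal/maximal minimizers, and no use of \eqref{L_checkisz_abbaho}, so your $\vartheta_0$ need not depend on $\vartheta$ at all. Two small points to attend to when writing it up: in the deep-interior case the relevant inequality is the Euclidean isoperimetric inequality for balls rather than \eqref{winterbottom_defos}, and your explicit constants ($R_*/4$, $\beta_0R_0/16$) require care when $\beta_0$ is close to $1$ — but this constant-chasing is of exactly the same nature as in the paper's Step 3 and only affects the value of $\vartheta_0$ and the thresholds, not the structure of the argument.
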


Theorem \ref{teo:compare_with_ball} is a generalization of \cite[Theorem 5.4]{ATW:1993} in the capillary setting. Notice that due to the presence of boundary terms in $\cF_\beta,$ we cannot argue as in the proof of \cite[Theorem 5.4]{ATW:1993}.
We postpone the proof after the following lemma.

\begin{lemma}\label{lem:compare_setg}
Let $E_0,F_0$ be bounded sets of finite perimeter in $\Omega$ with $|\p E_0|=|\p F_0|=0$, $\beta_1,\beta_2\in L^\infty(\p\Omega)$ satisfy \eqref{beta_coercive} and for $\tau>0$ let $E_\tau$ be a minimizer of $\cF_{\beta_1}(\cdot;E_0,\tau)$ and  $F_{\tau*}$ be the minimal minimizer of $\cF_{\beta_2}(\cdot;F_0,\tau).$

\begin{itemize}
\item[\rm(a)] Let $F_0\subseteq E_0$ and $\beta_2\ge \beta_1.$ Then $F_{\tau*}\subseteq E_\tau.$

\item[\rm(b)] Let $E_0\cap F_0=\emptyset$ and $\beta_1+\beta_2\ge0.$ Then $F_{\tau*}\cap E_\tau=\emptyset.$
\end{itemize}

\end{lemma}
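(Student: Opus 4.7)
The plan is to prove both parts by the same competitor-and-minimality scheme. For each part I choose a modification of $F_{\tau*}$ and a corresponding modification of $E_\tau$ satisfying a ``sub-additive'' energy estimate: summing the two minimality inequalities and using the identity $\chi_{A\cap B}+\chi_{A\cup B}=\chi_A+\chi_B$ (equivalently $\chi_{A\setminus B}+\chi_{B\setminus A}=\chi_A+\chi_B-2\chi_{A\cap B}$) together with the submodularity of the relative perimeter
$$
P(A\cap B,\Omega)+P(A\cup B,\Omega)\le P(A,\Omega)+P(B,\Omega),
$$
one forces equality throughout, and the minimality property of $F_{\tau*}$ then yields the desired inclusion. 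The only analytic input beyond bookkeeping is a pointwise sign statement for the signed distance functions, and this is where \eqref{compare_trunc_sdist} enters.

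For (a), I take $F_{\tau*}\cap E_\tau$ as competitor for $F_{\tau*}$ in $\cF_{\beta_2}(\cdot;F_0,\tau)$ and $F_{\tau*}\cup E_\tau$ as competitor for $E_\tau$ in $\cF_{\beta_1}(\cdot;E_0,\tau)$. The perimeter contributions drop by submodularity; the boundary contributions rearrange via the indicator identity to
$\int_{\p\Omega}(\beta_1-\beta_2)\chi_{F_{\tau*}\setminus E_\tau}\,d\cH^2\le 0$ since $\beta_2\ge\beta_1$; and the volume contributions rearrange to $\tfrac{1}{\tau}\int_{F_{\tau*}\setminus E_\tau}(\sd_{E_0}-\sd_{F_0})\,dx\le 0$, because $F_0\subseteq E_0$ gives $\sd_{F_0}\ge\sd_{E_0}$ by \eqref{compare_trunc_sdist}. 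Adding the two minimality inequalities forces equality in each of these three estimates, so $F_{\tau*}\cap E_\tau$ is itself a minimizer of $\cF_{\beta_2}(\cdot;F_0,\tau)$; minimality of $F_{\tau*}$ then forces $F_{\tau*}\subseteq F_{\tau*}\cap E_\tau$, i.e.\ $F_{\tau*}\subseteq E_\tau$.

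For (b), I use instead the competitors $F_{\tau*}\setminus E_\tau$ and $E_\tau\setminus F_{\tau*}$. The perimeter estimate $P(F_{\tau*}\setminus E_\tau,\Omega)+P(E_\tau\setminus F_{\tau*},\Omega)\le P(F_{\tau*},\Omega)+P(E_\tau,\Omega)$ is obtained by applying submodularity to $F_{\tau*}$ and $E_\tau^c$ and using $P(E_\tau^c,\Omega)=P(E_\tau,\Omega)$. The boundary contributions now collapse to $-\int_{\p\Omega}(\beta_1+\beta_2)\chi_{F_{\tau*}\cap E_\tau}\,d\cH^2\le 0$ under the assumption $\beta_1+\beta_2\ge 0$, and the volume contributions collapse to $-\tfrac{1}{\tau}\int_{F_{\tau*}\cap E_\tau}(\sd_{F_0}+\sd_{E_0})\,dx$. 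The key pointwise claim I must verify is $\sd_{E_0}+\sd_{F_0}\ge 0$ in $\Omega$ whenever $E_0\cap F_0=\emptyset$: off $E_0\cup F_0$ both summands are nonnegative, while for $x\in E_0$ any segment from $x$ to a point of $\Omega\cap\p^* F_0$ must first cross $\Omega\cap\p^* E_0$, giving $\sd_{F_0}(x)\ge\dist(x,\Omega\cap\p^* E_0)=|\sd_{E_0}(x)|$; the case $x\in F_0$ is symmetric. Summing the minimality inequalities and invoking the minimality of $F_{\tau*}$ exactly as in (a) then yields $F_{\tau*}\cap E_\tau=\emptyset$.

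The only delicate ingredient is the distance inequality just established in (b); the rest is routine BV submodularity together with indicator arithmetic, and I do not expect further obstacles.
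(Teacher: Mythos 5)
Your proposal is correct and follows essentially the same route as the paper: the same pairs of competitors ($F_{\tau*}\cap E_\tau$, $F_{\tau*}\cup E_\tau$ in (a) and $F_{\tau*}\setminus E_\tau$, $E_\tau\setminus F_{\tau*}$ in (b)), submodularity of the relative perimeter, and the sign conditions $\sd_{F_0}\ge\sd_{E_0}$ resp.\ $\sd_{E_0}+\sd_{F_0}\ge0$. The only difference is presentational: you make explicit the final step (equality forces the modified set to be a minimizer, and the minimal-minimizer property of $F_{\tau*}$ then gives the inclusion) and the verification of $\sd_{E_0}+\sd_{F_0}\ge0$, which the paper handles by a brief remark and a citation to \cite{BKh:2018}.
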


\begin{proof}
(a) Summing 
$$
\cF_{\beta_1}(E_\tau;E_0,\tau) \le \cF_{\beta_1}(E_\tau\cup F_{\tau*};E_0,\tau)
\quad\text{and}\quad 
\cF_{\beta_2}(F_{\tau*};F_0,\tau) \le \cF_{\beta_2}(F_{\tau*}\cap E_{\tau};F_0,\tau)
$$
and using 
$$
P(E_\tau,\Omega) + P(F_{\tau*},\Omega) \le P(E_\tau\cup F_{\tau*},\Omega) + P(F_{\tau*}\cap E_\tau,\Omega)
$$
we get 
$$
\frac1\tau\int_{F_{\tau*} \setminus E_\tau} [\sd_{F_0} - \sd_{E_0}]dx + \int_{\p\Omega} [\beta_2-\beta_1]\chi_{F_{\tau*}\setminus E_\tau}d\cH^2\le0.
$$
Since $\sd_{F_0}\ge \sd_{E_0}$ (by assumption $F_0\subset E_0$) and $\beta_2\ge\beta_1$, this inequality holds if and only if $|F_{\tau*}\setminus E_\tau|=0$ (see also \cite[Section 6]{BKh:2018}).

(b) Summing 
$$
\cF_{\beta_1}(E_\tau;E_0,\tau) \le \cF_{\beta_1}(E_\tau\setminus F_{\tau*};E_0,\tau)
\quad\text{and}\quad 
\cF_{\beta_2}(F_{\tau*};F_0,\tau) \le \cF_{\beta_2}(F_{\tau*}\setminus E_{\tau};F_0,\tau)
$$
and using 
$$
P(E_\tau,\Omega) + P(F_{\tau*},\Omega) \le P(E_\tau\cap F_{\tau*}^c,\Omega) + P(F_{\tau*}^c\cup E_\tau,\Omega) = P(E_\tau\setminus F_{\tau*},\Omega) + P(F_{\tau*}\setminus E_\tau,\Omega)
$$
we get 
$$
\frac1\tau\int_{F_{\tau*} \cap E_\tau} [\sd_{F_0} + \sd_{E_0}]dx + \int_{\p\Omega} [\beta_1+\beta_2]\chi_{F_{\tau*}\cap E_\tau}d\cH^2\le0.
$$
Since $E_0\cap F_0=\emptyset,$ we have $\sd_{E_0} + \sd_{F_0}\ge0$ in $\Omega.$ Therefore, recalling $\beta_1+\beta_2\ge0,$ we find that the last inequality holds if and only if $|E_\tau\cap F_{\tau*}|=0.$
\end{proof}

Now we are ready to prove relations \eqref{stay_inside}-\eqref{stay_outside}.

\begin{proof}[Proof of Theorem \ref{teo:compare_with_ball}]

We start by showing \eqref{stay_inside}. Depending on the position of $p$ we distinguish three cases.
\bigskip

{\it Case 1:} $B_{R_0}(p)\subset \Omega.$ 
\bigskip

Take any $B_r(p)\subset \Omega,$ $r>0.$ It is well-known that the unique minimizer of $\cF_{\beta_0}(\cdot; B_r(p),\tau)$ is a (possibly empty) ball $B_\rho(p)$ for some $\rho\ge0.$ By \eqref{L_checkisz_abbaho} 
$$
|r-\rho| \le \frac{\sqrt\tau}{\vartheta} \le \frac{4r}{5}
$$
provided $0<\tau<16\vartheta^2r^2/{25}.$ Thus, $\rho\ge 4r/5.$ Moreover, differentiating the function
$$
s\mapsto \cF_\beta(B_s;B_r,\tau) = 4\pi^2 s^2 + \frac{4\pi^2}{\tau}\int_0^s t^2(t-r)dt
$$
at $s=\rho,$ we obtain 
$$
\frac{r-\rho}{\tau} = \frac{2}{\rho} \le \frac{10}{4r}.
$$
This yields 
$$
2r^2 - 5\tau \le 2\rho r \le \rho^2+r^2,
$$
and thus, 
\begin{equation}\label{mashaha}
\rho^2 \ge r^2 - 5\tau,\quad 0<\tau<\tfrac{16\vartheta^2r^2}{25}.
\end{equation}

Let $\{B_{\rho(\tau,k)}(p)\}_{k\ge0}$ be the flat flows starting from $B_{R_0}(p)$ and associated to $\cF_{\beta_0}.$ By \eqref{mashaha}
\begin{equation}\label{radii_decraese}
\rho(\tau,k)^2 \ge \rho(\tau,k-1)^2 - 5\tau,\quad 0<\tau<\tfrac{16\vartheta^2\rho(\tau,k-1)^2}{25},
\end{equation}
for any $k\ge1$ with $\rho(\tau,k-1)>0,$ where $\rho(\tau,0)=R_0.$ Fix any $k_0\ge1.$ From \eqref{radii_decraese} and the monotonicity of $k\mapsto\rho(\tau,k)$ for any $1\le k\le k_0$  we get 
$$
\rho(\tau,k)^2 \ge R_0^2- 5k\tau,\quad 0<\tau<\tfrac{16\vartheta^2\rho(\tau,k-1)^2}{25}
$$
whenever $\rho(\tau,k-1)>0.$ Thus, if we choose $\tau<4\vartheta^2R_0^2/ 25$ and $5k_0\tau\le 3R_0^2/4 < 5(k_0+1)\tau$ (so that $\rho(\tau,k_0)\ge R_0/2$), then $\rho(\tau,k)\ge R_0/2$ for all $0\le k\le k_0.$
This and comparison lemma \ref{lem:compare_setg} (a) yield 
\begin{equation}\label{step1radius}
B_{R_0/2}\subset B_{\rho(\tau,k)}(p) \subset E(\tau,k),\quad 0<\tau<\tfrac{4\vartheta^2R_0^2}{25},\quad 0\le k\tau \le \tfrac{3R_0^2}{20}.
\end{equation}
\smallskip

{\it Step 2:} $B_{R_0}(p)\setminus\cl{\Omega}\ne\emptyset,$ but $B_{\beta_0 R_0/8}(p)\subset\Omega.$ 
\bigskip

By step 1 (applied with $R_0:=\beta_0R_0/8$)
\begin{equation}\label{step2radius}
B_{\beta_0R_0/16}(p) \subset E(\tau,k),\quad 0<\tau<\tfrac{\vartheta^2\beta_0^2R_0^2}{400},\quad 0\le k\tau < \tfrac{3\beta_0^2R_0^2}{1280}.
\end{equation}
\smallskip

\begin{figure}[htp!]
\includegraphics[width=0.5\textwidth]{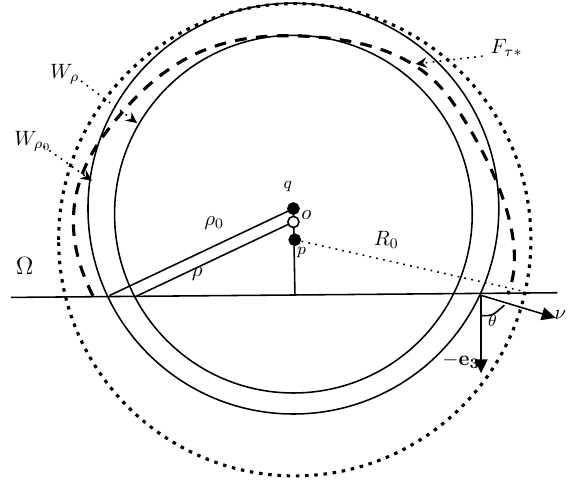}
\caption{Winterbottom shapes.}\label{fig:wulffs_evolving}
\end{figure}

{\it Step 3:} $B_{\beta_0R_0/8}(p)\setminus \cl{\Omega}\ne\emptyset$ i.e., $p\cdot\be_3<\beta_0R_0/8.$ 
\bigskip

Recall that (see e.g. \cite[Chapter 19]{Maggi:2012}, \cite{Kholmatov:2024} and \cite{KSch:2024}) in the isoperimetric inequality 
\begin{equation}\label{winterbottom_defos}
\cC_{\beta_0}(E) = P(E,\Omega) + \int_{\p\Omega} \beta_0\chi_{E}d\cH^2 \ge c_{\beta_0} |E|^{\frac{2}{3}},\quad E\in BV(\Omega;\{0,1\})
\end{equation}
the equality holds if and only if $E$ is a Winterbottom shape with contact angle $\beta_0$, i.e., the truncation of a ball with $\p\Omega$ at angle $\beta_0.$ Thus, if $B_\ell(a)\cap \Omega$ is a Winterbottom shape, then necessarily 
$$
a\cdot \be_3 = \ell \beta_0 = \ell \cos\theta,
$$
where $\theta$ is the angle between $-\be_3$ and the outer unit normal of the ball at its boundary points intersecting $\p\Omega.$ 

Let us denote by $W_r$ the Winterbottom shape whose radius is $r>0$ and center is located on the straight line passing through $p$ and parallel to $\be_3.$ One can readily check that $W_r\subset W_{r'}$ if $r<r'.$

By assumption of step 3, there exists a Winterbottom shape $W_r\subset B_{R_0}(p).$
Let $W_{\rho_0}:=B_{\rho_0}(q)\cap\Omega$ be the largest Winterbottom shape contained in $B_{R_0}(p).$ Then, in the notation of Figure \ref{fig:wulffs_evolving}, 
\begin{equation}\label{largest_winterbottom}
q\cdot \be_n = \rho_{0}\cos\theta,\quad \rho_0 = \tfrac{R_0 + p\cdot \be_3}{1+\cos\theta}.
\end{equation}
Let $F_{\tau*}$ be the minimal minimizer of $\cF_{\beta_0}(\cdot; W_{\rho_0},\tau).$ Then by \eqref{L_checkisz_abbaho}
$$
\sup_{x\in F_0 \Delta F_{\tau*}} \d_{F_0}(x) = 
\sup_{x\in \Omega\cap [B_{\rho_0}(q)\Delta F_{\tau*}]}\,\,||x|-\rho_0| \le \frac{\sqrt\tau}{\vartheta} \le \frac{\rho_0}{5}
$$
provided that $0<\tau<{\vartheta^2\rho_0^2}/{25}$.
Thus,
\begin{equation}\label{nima_bor_orqasida}
B_{4\rho_0/5}(q) \subset  F_{\tau*}.
\end{equation}
Next, let $W_\rho$ and $W_{\bar \rho}$ be the largest Winterbottom shapes contained in $F_{\tau*}$ and in $B_{4\rho_0/5}(q),$ respectively. By \eqref{nima_bor_orqasida}, $\rho\ge \bar\rho.$
Moreover, as in \eqref{largest_winterbottom}
$$
\bar\rho = \tfrac{4\rho_0/5 + \bar o\cdot\be_n}{1+\cos\theta},
$$
where $\bar o\in\Omega$ is the center of $W_{\bar\rho}.$ Thus,
\begin{equation}\label{headliefht}
\rho\ge \bar\rho \ge  \tfrac{4\rho_0}{5(1+\cos\theta)}.
\end{equation}

As in step 1, we would like to estimate $\rho_0-\rho$ from above by a multiple of $1/\rho_0$. If $\rho\ge\rho_0,$ we are done. Otherwise, fix a small $\epsilon\in(0,\rho_0-\rho)$ and consider the Winterbottom shape $W_{\rho+\epsilon}.$ The maximality of $\rho$ implies $W_\rho\subset F_{\tau*}$ and $|W_{\rho+\epsilon}\setminus F_{\tau*}|>0.$ Moreover, by the minimality of $F_{\tau*},$
$$
\cF_{\beta_0}(F_{\tau*},F_0,\tau) \le \cF_{\beta_0}(F_{\tau*}\cup W_{\rho+\epsilon},F_0,\tau),
$$
or equivalently,
\begin{equation}\label{using_minimality}
\cC_{\beta_0}(F_{\tau*}\cup W_{\rho+\epsilon})-\cC_{\beta_0}(F_{\tau*}) \ge\frac{1}{\tau}\int_{W_{\rho+\epsilon}\setminus F_{\tau*}} \d_{F_0}dx.
\end{equation}
Since the  centers of the Winterbottom shapes $W_{\rho_0}$ and $W_\rho$ lie on the same line, 
$$
\d_{F_0} \ge \rho_0-\rho-\epsilon-|[qo_\epsilon]| \quad\text{in $W_{\rho+\epsilon}\setminus F_{\tau*},$}
$$
where $o_\epsilon\in\Omega$ is the center of $W_{\rho+\epsilon}.$ By the contact angle condition, as in \eqref{largest_winterbottom}
$$
|[qo_\epsilon]| = q\cdot\be_n - o_\epsilon\cdot \be_n = (\rho_0 - \rho-\epsilon) \cos\theta.
$$
Thus, 
\begin{equation}\label{signed_dpart}
 \frac{1}{\tau}\int_{W_{\rho+\epsilon}\setminus F_{\tau*}} \d_{F_0}dx\ge \frac{\rho_0-\rho-\epsilon}{\tau}\,(1-\cos\theta)|W_{\rho+\epsilon}\setminus F_{\tau*}|.
\end{equation}
On the other hand, for a.e. $\epsilon$ with $\cH^{2}(\pOmega W_{\rho}\cap \pOmega F_{\tau*})=0$ we have 
$$
\cC_{\beta_0}(F_{\tau*}\cup W_{\rho+\epsilon})-\cC_{\beta_0}(F_{\tau*}) = 
\cC_{\beta_0}(W_{\rho+\epsilon}) - \cC_{\beta_0}(W_{\rho+\epsilon}\cap F_{\tau*}) \le c_{\beta_0} \Big(|W_{\rho+\epsilon}|^{\frac{2}{3}} - |W_{\rho+\epsilon}\cap F_{\tau*}|^{\frac{2}{3}} \Big).
$$
where in the last inequality we used \eqref{winterbottom_defos} and the definition of the Winterbottom shape. Since $|W_{\rho+\epsilon}\setminus F_{\tau*}|\to0$ as $\epsilon\to0^+,$ using the obvious inequality 
$$
(1-t)^\alpha\ge 1 - t,\quad \alpha,t\in(0,1)
$$
we deduce 
\begin{align*}
 |W_{\rho+\epsilon}|^{\frac{2}{3}} - |W_{\rho+\epsilon}\cap F_{\tau*}|^{\frac{2}{3}} = 
 |W_{\rho+\epsilon}|^{\frac{2}{3}} \left( 1- \left[1 - \tfrac{|W_{\rho+\epsilon}\setminus F_{\tau*}|}{|W_{\rho+\epsilon}|} \right]^{\frac{2}{3}}\right) \le \tfrac{|W_{\rho+\epsilon}\setminus F_{\tau*}|}{|W_{\rho+\epsilon}|^{1/3}}.
\end{align*}
Therefore, 
\begin{equation}\label{capillary_estimates}
\cC_{\beta_0}(F_{\tau*}\cup W_{\rho+\epsilon})-\cC_{\beta_0}(F_{\tau*}) \le \tfrac{c_{\beta_0}|W_{\rho+\epsilon}\setminus F_{\tau*}|}{|W_{1}|^{1/3}(\rho+\epsilon)}.
\end{equation}
By the definition of the Winterbottom shape $c_{\beta_0}=\cC_{\beta_0}(W_1)/|W_1|^{2/3}$ and $\beta_0=\cos\theta.$  
Inserting \eqref{signed_dpart} and \eqref{capillary_estimates} in \eqref{using_minimality}, and letting $\epsilon\to0$ we get 
$$
\tfrac{\rho_0-\rho}{\tau} \le \tfrac{\cC_{\beta_0}(W_1)}{|W_1|(1-\beta_0)}\,\tfrac{1}{\rho} \le \tfrac{5\cC_{\beta_0}(W_1) (1+\beta_0)}{4|W_1|(1-\beta_0)}\,\tfrac{1}{\rho_0} =: \tfrac{C_{\beta_0}}{\rho_0},
$$
where in the last inequality we used \eqref{headliefht}. 
As in \eqref{mashaha} this yields 
\begin{equation}\label{new_mashahas}
\rho^2  \ge \rho_0^2- C_{\beta_0}\tau,\quad 0<\tau<\tfrac{\vartheta^2\rho_0^2}{25}. 
\end{equation}

Let the numbers $\rho_0:=\rho(\tau,0)\ge \rho(\tau,1)\ge\ldots>0$ be defined as follows: for each $k\ge1,$ if $\rho(\tau,k)<\rho(\tau,k-1),$ then the $W_{\rho(\tau,k)}\ne\emptyset$ is the maximal Winterbottom shape stying inside the minimal minimizer of $\cF_{\beta_0}(\cdot; W_{\rho(\tau,k-1)},\tau).$
By \eqref{new_mashahas}
$$
\rho(\tau,k)^2 \ge \rho(\tau,0) - C_{\beta_0}k\tau = \rho_0^2-C_{\beta_0}k\tau \ge \frac{\rho_0^2}{4}
$$
provided that $0<\tau<{\vartheta^2\rho^2}/{100}$ and $0\le k\tau\le {3\rho_0^2}/(4C_{\beta_0}).$
 
Consider the family $\{E(\tau,k)\}$ of flat flows starting from $E_0$ and associated to $\cF_\beta.$ Since 
$$
W_{\rho(\tau,0)} \subset \Omega\cap B_{R_0}(p)\subset E(\tau,0)
$$
and $\|\beta\|_\infty\le \beta_0,$ by Lemma \ref{lem:compare_setg} and the definition of $W_{\rho(\tau,1)}$ we have 
$
W_{\rho(\tau,1)}\subset E(\tau,1).
$
By induction 
$$
W_{\rho_0/2}\subset 
W_{\rho(\tau,k)}\subset E(\tau,k),\quad 0<\tau<\tfrac{\vartheta^2\rho_0^2}{100},\quad 0\le k\tau\le \tfrac{3\rho_0^2}{4C_{\beta_0}}.
$$
Let $\{y_k\}$ be the centers of these Winterbottom shapes. As in \eqref{largest_winterbottom} $\beta_0\rho(\tau,k)=y_k\cdot\be_3.$ Then by assumption of the case 3 and the definition $\rho_0$ in \eqref{largest_winterbottom} 
$$
p\cdot \be_3\le \tfrac{\beta_0 R_0}{8} < \tfrac{\beta(1+\beta)}{8}\rho_0 <\tfrac{\beta_0\rho_0}{2}-\tfrac{\beta_0\rho_0}{4} \le \beta_0\rho(\tau,k) - \tfrac{\beta_0\rho_0}{4} = y_k\cdot \be_3 - \tfrac{\beta_0\rho_0}{4}.
$$
This implies $B_{\beta_0\rho_0/4}(p)\subset B_{\rho(\tau,k)}(y_k)$ and therefore, again using the definition $\rho_0$ in \eqref{largest_winterbottom} we find 
\begin{equation}\label{step3radius}
\Omega\cap B_{\frac{\beta_0}{4(1+\beta_0)}R_0} \subset 
E(\tau,k),\quad 0<\tau<\tfrac{\vartheta^2R_0^2}{100(1+\beta_0)},\quad 0\le k\tau\le \tfrac{3R_0^2}{4C_{\beta_0}(1+\beta_0)}.
\end{equation} 
Since $\beta_0\in (0,1),$ \eqref{stay_inside} follows from \eqref{step1radius},  \eqref{step2radius} and \eqref{step3radius}.

To prove \eqref{stay_outside} we repeat the same arguments using Lemma \ref{lem:compare_setg} (b) in place of (a).
\end{proof}

\subsection{Smooth barriers for minimizers of $\cF_\beta$}

The aim of this section is the following analogue of \cite[Lemma 7.3]{ATW:1993}.

\begin{lemma}\label{lem:barrier}
Let $\beta\in C^{1+\alpha}(\p\Omega),$ $\alpha\in (0,1],$ satisfy \eqref{beta_coercive}, $E_0 \in BV(\Omega;\{0,1\})$ be a bounded set, $\tau>0$ and $E_\tau$ be a minimizer of $\cF_\beta(\cdot; E_0,\tau).$ Let $G_0$ and $G_\tau$ be any admissible sets.

\begin{itemize}
\item[\rm(a)] Assume that $E_0\subset G_0,$ $E_\tau \subset G_\tau,$ $G_\tau$ satisfies the contact angle condition with $\beta +s$ for some $s\in (0,\eta)$ and 
\begin{equation}\label{dnaufh7634275}
\tfrac{\sd_{G_0}(x)}{\tau} > -\kappa_{G_\tau}(x),\quad x\in \pOmega G_\tau. 
\end{equation}
Then $E_\tau \prec G_\tau.$

\item[\rm(b)] Assume that $G_0\subset E_0,$ $G_\tau \subset E_\tau,$ $G_\tau$ satisfies the contact angle condition with $\beta - s$ for some $s\in (0,\eta)$ and 
$$
\tfrac{\sd_{G_0}(x)}{\tau} < -\kappa_{G_\tau}(x),\quad x\in \pOmega G_\tau.
$$
Then $G_\tau \prec E_\tau.$
\end{itemize}

\end{lemma}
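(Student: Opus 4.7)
My approach is a strong‑maximum‑principle argument in the spirit of classical barrier lemmas: assume toward a contradiction in (a) that $E_\tau\subset G_\tau$ but $\dist(\pOmega E_\tau,\pOmega G_\tau)=0$. By boundedness of the sets I extract sequences $(x_n)\subset\pOmega E_\tau$ and $(y_n)\subset\pOmega G_\tau$ converging to a common touching point $x_0\in \cl{\pOmega E_\tau}\cap\cl{\pOmega G_\tau}\subset\cl{\Omega}$, and the proof splits into an interior case $x_0\in\Omega$ and a contact-line case $x_0\in\p\Omega$.

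In the interior case, Theorem~\ref{teo:propertie_minimizers}(g) provides $C^{2+\gamma}$-regularity of $\pOmega E_\tau$ at $x_0$, while admissibility of $G_\tau$ gives $C^{2+\alpha}$-regularity of $\pOmega G_\tau$ at $x_0$. The tangency of two smooth hypersurfaces with $E_\tau$ touching $G_\tau$ from inside yields $\nu_{E_\tau}(x_0)=\nu_{G_\tau}(x_0)$ and the second-order comparison $\kappa_{E_\tau}(x_0)\ge\kappa_{G_\tau}(x_0)$. The Euler--Lagrange equation obtained from the first variation of $\cF_\beta(\,\cdot\,;E_0,\tau)$ at the minimizer on its smooth free boundary reads $\kappa_{E_\tau}(x)=-\sd_{E_0}(x)/\tau$. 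Since $E_0\subset G_0$ translates via \eqref{compare_trunc_sdist} into $\sd_{E_0}\ge\sd_{G_0}$, combining it with the strict hypothesis \eqref{dnaufh7634275} produces
\[
\kappa_{E_\tau}(x_0)=-\tfrac{\sd_{E_0}(x_0)}{\tau}\le -\tfrac{\sd_{G_0}(x_0)}{\tau}<\kappa_{G_\tau}(x_0),
\]
contradicting $\kappa_{E_\tau}(x_0)\ge\kappa_{G_\tau}(x_0)$.

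The contact-line case $x_0\in\p\Omega$ is the crux. Here $x_0$ lies simultaneously on the contact lines of $E_\tau$ and $G_\tau$; by Theorem~\ref{teo:propertie_minimizers}(g) the Young condition $\nu_{E_\tau}(x_0)\cdot\be_3=-\beta(x_0)$ holds in the limit, whereas admissibility forces $\nu_{G_\tau}(x_0)\cdot\be_3=-\beta(x_0)-s$, so the tangent planes of the two free boundaries at $x_0$ differ by an angle bounded below by a positive multiple of $s$. Using a boundary chart from Definition~\ref{def:admissible_sets} to flatten $\p\Omega$, I would carry the Euler--Lagrange identity and the sign control $\sd_{E_0}\ge\sd_{G_0}$ continuously up to $x_0$ via the full boundary regularity provided by Theorem~\ref{teo:propertie_minimizers}(g), and then apply a Hopf-type lemma at the corner formed by $\pOmega E_\tau$ (resp.\ $\pOmega G_\tau$) and $\p\Omega$. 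The angular gap $s>0$ together with the strict inequality \eqref{dnaufh7634275} then forces $\pOmega E_\tau$ to detach from $\pOmega G_\tau$ along any approach to $x_0$, ruling out the touching sequences. This corner analysis is the main obstacle: the interior tangency comparison is inapplicable because $\nu_{E_\tau}(x_0)\ne\nu_{G_\tau}(x_0)$, so one must exploit the transversal mismatch produced by $s>0$ together with $E_0\subset G_0$ to close the argument.

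Part (b) is obtained by running the same dichotomy with the roles of $E_\tau$ and $G_\tau$ exchanged and $\beta+s$ replaced by $\beta-s$. An interior tangency from inside now yields $\kappa_{E_\tau}(x_0)\le\kappa_{G_\tau}(x_0)$, and using $G_0\subset E_0$ (hence $\sd_{G_0}\ge\sd_{E_0}$) together with the reversed curvature--distance inequality produces $\kappa_{G_\tau}(x_0)<-\sd_{G_0}(x_0)/\tau\le\kappa_{E_\tau}(x_0)$, the analogous contradiction; the contact-line case is identical modulo sign, handled by the same Hopf-type argument with the angular gap now provided by $\beta-s<\beta$.
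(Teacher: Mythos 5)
Your overall strategy --- argue by contradiction from a touching point $x_0\in\cl{\pOmega E_\tau}\cap\cl{\pOmega G_\tau}$ and split into an interior and a boundary case --- is the paper's, and your interior case is complete and coincides with the paper's proof: interior tangency from inside gives $\kappa_{E_\tau}(x_0)\ge\kappa_{G_\tau}(x_0)$, while the Euler--Lagrange identity $\sd_{E_0}/\tau=-\kappa_{E_\tau}$ on $\pOmega E_\tau$, the monotonicity $\sd_{E_0}\ge\sd_{G_0}$ coming from $E_0\subset G_0$, and the strict inequality \eqref{dnaufh7634275} give the opposite strict inequality, a contradiction; the computation for (b) is likewise correct.

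The gap is the contact-line case, which you call ``the main obstacle'' and only sketch (``I would \dots apply a Hopf-type lemma at the corner''). In fact no Hopf-type lemma, no boundary flattening, and no use of the Euler--Lagrange identity or of $E_0\subset G_0$ is needed there: the case is excluded by a purely first-order observation, exactly as in the strong comparison principle (Theorem \ref{teo:comparison}). Both free boundaries are $C^1$ up to $\p\Omega$ (Theorem \ref{teo:propertie_minimizers}(g) for $E_\tau$, admissibility for $G_\tau$), and at a putative common point $x_0\in\p\Omega$ they meet the plane at angles $\theta_E,\theta_G$ with $\cos\theta_E=\beta(x_0)$ and $\cos\theta_G=\beta(x_0)+s$, so $\theta_G<\theta_E$ strictly. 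But $E_\tau\subset G_\tau$ with both contact lines passing through $x_0$ forces, to first order at $x_0$, the wedge of opening $\theta_E$ that $E_\tau$ forms with $\p\Omega$ to be contained in the wedge of opening $\theta_G$ formed by $G_\tau$, i.e. $\theta_E\le\theta_G$ --- a contradiction. So the angular gap you correctly identified already finishes the argument by itself; as written, your proposal leaves this step unproven and wraps it in machinery that is not needed. The same remark applies verbatim to the boundary case of (b), where the gap is $\beta-s<\beta$ and the roles of the two sets are exchanged.
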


\begin{proof}
(a) By the regularity of $E_\tau$ (see Theorem \ref{teo:propertie_minimizers} (f)), $\cl{\pOmega E_\tau}$ is a $C^2$-hypersurface with boundary, and hence, by the first variation formula, 
\begin{equation}\label{eular_legreg}
\tfrac{\sd_{E_0}(x)}{\tau} = -\kappa_{E_\tau}(x),\quad x\in \pOmega E_\tau,\quad\text{and}\quad \nu_{E_\tau}(x) \cdot \be_3 = -\beta(x),\quad x\in \p\Omega \cap \cl{\pOmega E_\tau}.
\end{equation}
By contradiction, let there exist $x_0\in \cl{\pOmega E_\tau}\cap \cl{\pOmega G_\tau}.$ By assumption $E_\tau\subset G_\tau$ and the contact angle condition it follows that  $x_0\in\Omega$ and $\kappa_{E_\tau}(x_0)\ge \kappa_{G_\tau}(x_0).$ On the other hand, by assumption $E_0\subset G_0$ and \eqref{compare_trunc_sdist} $\sd_{E_0}(x_0)\ge \sd_{G_0}(x_0),$ and therefore, from \eqref{dnaufh7634275} and the first equality in \eqref{eular_legreg} we obtain
$$
\tfrac{\sd_{G_0}(x_0)}{\tau} \le \tfrac{\sd_{E_0}(x_0)}{\tau} = -\kappa_{E_\tau}(x_0) \le -\kappa_{G_\tau}(x) <\tfrac{\sd_{G_0}(x_0)}{\tau},
$$
a contradiction.

(b) is analogous.
\end{proof}

\section{Proof of Theorem \ref{teo:consistence}} \label{sec:proof_consos}

Let $\{E(t)\}_{t\in [0,T^\dag)}$ be a smooth mean curvature flow starting from $E_0$ and with contact angle $\beta,$ and let $F(\cdot)\in GMM(\cF_\beta,E_0).$ Following \cite[Theorem 7.4]{ATW:1993} we fix any $T\in (0,T^\dag)$ and show 
\begin{equation}\label{shortly_equals}
E(t)=F(t)\quad\text{for any $0 < t < T.$}
\end{equation}
Let $\rho\in(0,1),$ $\sigma\in(0,\eta),$ the smooth flows $\{G^\pm [r,s,a,t]:\,\,(r,s,a)\in[0,\rho]\times [0,\sigma]\times [0,T],\,t\in [a,T]\}$ starting from $\{G_0^\pm[r,s,a]\},$ and $t^*>0$ be given by the second part of Theorem \ref{teo:short_time}. Let $\tau_j\searrow0$ and flat flows $\{F(\tau_j,k)\}_{k\ge0}$ starting from $E_0$ and associated to $\cF_\beta$ satisfy 
\begin{equation}\label{flats_converge}
\lim\limits_{j\to+\infty} |F(\tau_j,\intpart{t/\tau_j})\Delta F(t)| =0\quad\text{for all $t\ge0.$}
\end{equation}

We start with an ancillary technical lemma.
For $s\in(0,\sigma]$ let $\tau_0(s)>0$ be given by Proposition \ref{prop:time_regular_sdist} and for $\beta_0:=\frac{1+\|\beta\|_\infty}{2}$ let $\vartheta_0$ be given by Theorem \ref{teo:compare_with_ball}. We may assume that $\tau_j<\vartheta_0\rho^2/64^2$ for all $j.$ 

\begin{lemma}\label{lem:chaklpakes}
Assume that $t_0\in[0,T)$ and $k_0\in\N_0$ are such that 
\begin{equation}\label{initially_Good}
G_0^-[0,s,t_0] \subset F(\tau_j,k_0) \subset G_0^+[0,s,t_0].
\end{equation}
Then there exists $\bar t\in(0,t^*]$ depending only on $t^*$ and $\rho$ such that 
\begin{equation*}
G^-[0,s,t_0, t_0+k\tau_j] \subset F(\tau_j,k_0+k) \subset G^+[0,s,t_0,t_0+k\tau_j]
\end{equation*}
for all $s\in(0,\sigma],$ $j\ge1$ with $\tau_j\in(0,\tau_0(s))$ and $k=0,1,\ldots, \intpart{\bar t/\tau_j}$ with $t_0+k\tau_j<T.$ 
Moreover, let $t_0+\bar t < T,$ the increasing continuous function $g$ be given by Theorem \ref{teo:short_time} (b) and $\bar\sigma\in(0,\sigma/2)$ be such that $4g(2\bar\sigma)<\sigma.$ Then for any $s\in(0,\bar\sigma)$ there exists $\bar j(s)>1$ such that 
\begin{equation}\label{tiktok_videos}
G_0^-[0,4g(2s), t_0+\bar t] \subset F(\tau_j,k_0+\bar k_j) \subset G_0^+[0,4g(2s), t_0+\bar t]
\end{equation}
whenever $j>\bar j(s),$ where $\bar k_j:=\intpart{\bar t/\tau_j}.$
\end{lemma}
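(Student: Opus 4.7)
The plan is to prove the first assertion by induction on $k$, using the barrier Lemma \ref{lem:barrier} to propagate the two-sided containment across each discrete minimization step. The base case $k=0$ is \eqref{initially_Good} itself. For the inductive step, focusing on the outer inclusion (the inner one is fully symmetric), assume $F(\tau_j,k_0+k)\subset G^+[0,s,t_0,t_0+k\tau_j]$ and apply Lemma \ref{lem:barrier}(a) with $E_0:=F(\tau_j,k_0+k)$, $E_\tau:=F(\tau_j,k_0+k+1)$, $G_0:=G^+[0,s,t_0,t_0+k\tau_j]$, and $G_\tau:=G^+[0,s,t_0,t_0+(k+1)\tau_j]$. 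The admissibility of $G_\tau$ and its contact angle $\beta+s$ are provided by Theorem \ref{teo:short_time}, while the strict inequality $\sd_{G_0}(x)/\tau_j>-\kappa_{G_\tau}(x)$ on $\pOmega G_\tau$ follows from Proposition \ref{prop:time_regular_sdist} (with slack $s/2$) as long as $\tau_j<\tau_0(s)$. The lemma then outputs $E_\tau\prec G_\tau$, closing the induction at step $k+1$.

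The subtle point is that Lemma \ref{lem:barrier}(a) also requires the a priori soft inclusion $E_\tau\subset G_\tau$. This is where the cushion barriers $G^\pm[\rho,s,t_0,\cdot]$ and the Winterbottom-shape comparison come in. By the displacement bound \eqref{L_checkisz_abbaho}, one minimization step moves the set by at most $\sqrt{\tau_j}/\vartheta$, which under the standing hypothesis $\tau_j<\vartheta_0\rho^2/64^2$ is much smaller than $\rho$; combined with Theorem \ref{teo:compare_with_ball} applied to the exterior of $G_0^+[\rho,s,t_0]$, this traps $F(\tau_j,k_0+k+1)$ inside the thick barrier $G^+[\rho,s,t_0,t_0+(k+1)\tau_j]$. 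The cushion property \eqref{military_conflict} in Theorem \ref{teo:short_time}(c) then guarantees that this thick barrier still contains the thin $G_\tau$ as long as the elapsed time is $\le t^*$, providing the required soft containment. Accordingly, $\bar t$ is chosen as a suitable fraction of $t^*$ depending only on $\rho$ and $t^*$, so that both the cushion property and the Winterbottom comparison remain valid throughout $k=0,1,\dots,\intpart{\bar t/\tau_j}$.

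The second assertion \eqref{tiktok_videos} follows from the first combined with Theorem \ref{teo:short_time}(b). Indeed, $\pOmega G^\pm[0,s,t_0,t_0+\bar t]$ lies within Hausdorff distance $g(s)<g(2s)<4g(2s)$ of $\pOmega E(t_0+\bar t)=\pOmega G^\pm[0,0,t_0,t_0+\bar t]$, whereas $\pOmega G_0^\pm[0,4g(2s),t_0+\bar t]$ lies at distance exactly $4g(2s)$ from the same reference surface. Using the monotone foliation structure of Corollary \ref{cor:time_foliations} together with the continuous dependence of $G^\pm[0,s,t_0,\cdot]$ on $s$, this gives, for every $s<\bar\sigma$,
$$
G_0^-[0,4g(2s),t_0+\bar t]\subset G^-[0,s,t_0,t_0+\bar t]\subset G^+[0,s,t_0,t_0+\bar t]\subset G_0^+[0,4g(2s),t_0+\bar t],
$$
and sandwiching $F(\tau_j,k_0+\bar k_j)$ between the inner and outer smooth barriers yields \eqref{tiktok_videos} for all $j>\bar j(s)$, where $\bar j(s)$ is chosen so that $\tau_j<\tau_0(s)$ and $\bar k_j\tau_j$ is sufficiently close to $\bar t$. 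The main obstacle will be the preliminary soft containment step: in the boundary-free ATW setting this is handled by shrinking balls, but here the capillary term in $\cF_\beta$ forces the use of Winterbottom shapes via Theorem \ref{teo:compare_with_ball} to absorb the contribution of the contact set, and requires carefully balancing the displacement estimate \eqref{L_checkisz_abbaho} against the cushion thickness $\rho$ so that $\bar t$ depends only on $t^*$ and $\rho$, not on $s$ or $\tau_j$.
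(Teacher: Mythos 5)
Your identification of the delicate point is right, but your resolution of it does not work, and this is exactly where the paper's proof differs. In your inductive step you apply Lemma \ref{lem:barrier}(a) with the \emph{thin} barrier $G_\tau=G^+[0,s,t_0,t_0+(k+1)\tau_j]$, so you need the soft inclusion $F(\tau_j,k_0+k+1)\subset G^+[0,s,t_0,t_0+(k+1)\tau_j]$ as a hypothesis. What you actually establish is that $F(\tau_j,k_0+k+1)$ is trapped inside the \emph{thick} barrier $G^+[\rho,s,t_0,\cdot]$, and then you invoke \eqref{military_conflict} to say the thick barrier contains the thin one. But for the outer family the thick barrier is the \emph{larger} set, so ``$F\subset$ thick'' together with ``thin $\subset$ thick'' says nothing about $F\subset$ thin: the inclusion goes the wrong way. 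Nor can the displacement bound \eqref{L_checkisz_abbaho} rescue a direct induction at $r=0$: one minimization step may move the boundary by $\sqrt{\tau_j}/\vartheta$, which is much larger than the $O(\tau_j)$ motion of the smooth barrier over one step, so nothing prevents the flat flow from overshooting the thin barrier in a single step; and the strict inclusion produced by Lemma \ref{lem:barrier} at the previous step carries no quantitative gap to absorb this.

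The paper closes this gap with a first-touching (continuity) argument in the foliation parameter $r\in[0,\rho]$, not with a per-step estimate. The ball/Winterbottom comparison (Theorem \ref{teo:compare_with_ball}, applied with $R_0=\rho/4$ to the sets $G_0^\pm[\rho/4,s,t_0]$) together with the cushion property \eqref{military_conflict} gives the two-sided containment between the $r=\rho$ flows $G^\pm[\rho,s,t_0,t_0+k\tau_j]$ for \emph{all} $k\le\intpart{\bar t/\tau_j}$, $\bar t:=\min\{t^*,t^{**}\}$; this only shows that the set $\{r\in[0,\rho]:\ F(\tau_j,k_0+k)\subset G^+[r,s,t_0,t_0+k\tau_j]\ \forall k\}$ is nonempty. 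One then takes $\bar r$ to be its infimum and, assuming $\bar r>0$, considers the smallest $k$ at which $\cl{\pOmega F(\tau_j,k_0+k)}$ touches $\cl{\pOmega G^+[\bar r,s,t_0,t_0+k\tau_j]}$. At that step the soft inclusions required by Lemma \ref{lem:barrier}(a) hold automatically (inclusion is closed under $r\downarrow\bar r$, and minimality of $k$ gives the inclusion at step $k-1$), Proposition \ref{prop:time_regular_sdist} supplies the strict curvature inequality, and the lemma yields $F\prec G^+[\bar r,\cdot]$, contradicting the touching; hence $\bar r=0$. Your treatment of the second assertion is essentially the paper's, except that to handle the mismatch between $t_0+\bar k_j\tau_j$ and $t_0+\bar t$ the paper first passes to the $2s$-barriers via the strong comparison principle (so that the strict gap $G^\pm[0,s,\cdot]\prec G^\pm[0,2s,\cdot]$ survives the small time shift for $j$ large), which is the quantitative margin your phrase ``$\bar k_j\tau_j$ sufficiently close to $\bar t$'' implicitly needs. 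As written, though, the first part of your argument has a genuine logical gap.
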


\begin{proof}
By Corollary \ref{cor:time_foliations} (a) 
$$
\dist(\pOmega G_0^\pm[\rho/4,s,t_0],\pOmega G_0^\pm[0,s,t_0]) = \rho/4,
$$ 
and therefore,  by \eqref{initially_Good} 
\begin{equation}\label{init_Better}
G_0^-[\rho/4,s,t_0]\prec G_0^-[0,s,t_0] \subset F(\tau_j,k_0) \subset G_0^+[0,s,t_0]\prec G_0^+[\rho/4,s,t_0]
\end{equation}
and by \eqref{init_Better} $B_{\rho/4}(x)\subset F(\tau_j,k_0) $ if $ x\in G_0^-[\rho/4,s,t_0]$ and 
$B_{\rho/4} (x)\cap F(\tau_j,k_0) =\emptyset $ if $x\in \Omega\setminus G_0^+[\rho/4,s,t_0].$ Therefore, using Theorem \ref{teo:compare_with_ball} (with $R_0=\rho/4$ and $\beta_0:= \frac{1+\|\beta\|_\infty}{2}$) and again \eqref{initially_Good} we obtain
\begin{equation}\label{bolls0987}
\begin{cases}
B_{\frac{\beta_0\rho}{64}} (x)\subset F(\tau_j,k_0+k) &  x\in G_0^-[\rho/4,s,t_0],\\
B_{\frac{\beta_0\rho}{64}} (x)\cap F(\tau_j,k_0+k) =\emptyset & x\in \Omega\setminus G_0^+[\rho/4,s,t_0],
\end{cases}
\quad k=0,1,\ldots,\intpart{t^{**}/\tau_j},
\end{equation}
where 
$$
t^{**}:= \tfrac{\vartheta_0\rho^2}{16}.
$$
By \eqref{bolls0987} and Corollary \ref{cor:time_foliations} (b)
\begin{equation}\label{qoshona}
G_0^-\Big[\tfrac{\rho}{2}, s,t_0\Big]\subset 
G_0^-\Big[\tfrac{\rho}{4}- \tfrac{\beta_0\rho}{64}, s,t_0\Big] \subset F(\tau_j,k_0+k) \subset G_0^+\Big[\tfrac{\rho}{4}- \tfrac{\beta_0\rho}{64}, s,t_0\Big] \subset G_0^+\Big[\tfrac{\rho}{4}, s,t_0\Big]
\end{equation}
for all $0\le k\le \intpart{t^{**}/\tau_j}.$ 
Set 
$$
\bar t:=\min\Big\{t^*,t^{**}\Big\},
$$ 
where $t^*$ is given by Theorem \ref{teo:short_time} (c). Then by \eqref{military_conflict} and \eqref{qoshona} 
\begin{equation}\label{qoshona_new}
G_0^-[\rho, s,t_0+k\tau_j] \subset F(\tau_j,k_0+k) \subset G_0^+[\rho, s,t_0+ k\tau_j],\quad k=0,1,\ldots, \intpart{\bar t/\tau_j},
\end{equation}
with $t_0+k\tau_j<T.$ 
We claim for such $k$  and $j\ge1$ with $\tau_j\in(0,\tau_0(s))$
\begin{equation*}
G^-[0,s,t_0,t_0+k\tau_j] \subset F(\tau_j,k_0+k) \subset G^+[0,s,t_0, t_0+k\tau_j].
\end{equation*}
Indeed,  let 
$$
\bar r:=\inf\Big\{r\in [0,\rho]:\,\,\, F(\tau_j,k_0+k) \subset G^+[r,s,t_0,t_0+k\tau_j]\quad k=0,1,\ldots,\intpart{\bar t/\tau_j},\,\,t_0+k\tau_j<T\Big\}.
$$
By \eqref{qoshona_new} the infimum is taken over a nonempty set. By contradiction, assume that $\bar r>0.$ In view of the continuity of $G^+[r,s,t_0,t_0+k\tau_j]$ at $r=\bar r,$ there exists the smallest integer $k\le \intpart{\bar t/\tau_j}$ (clearly, $k>0$ by \eqref{init_Better}) such that 
\begin{equation}\label{chandui_jahon098}
\cl{\pOmega F(\tau_j,k_0+k)} \cap \cl{\pOmega G^+[\bar r,s,t_0,t_0+k\tau_j]} \ne\emptyset.
\end{equation}
By the minimality of $k\ge1$ 
$$
F(\tau_j,k_0+k-1)\subset G^+[\bar r,s,t_0,t_0+(k-1)\tau_j],\qquad 
F(\tau_j,k_0+k)\subset G^+[\bar r,s,t_0,t_0 + k\tau_j].
$$
Moreover, by construction $G^+[\bar r,s,t_0,t_0+k\tau_j]$ satisfies the contact angle condition with $\beta+s$ and by Proposition \ref{prop:time_regular_sdist} applied with $\tau=\tau_j\in(0,\tau_0(s))$
$$
\tfrac{\sd_{G^+[\bar r,s,t_0, t_0+(k-1)\tau_j]}(x)}{\tau_j} > -\kappa_{G^+[\bar r,s,t_0,t_0 + k\tau_j]}(x) +\tfrac{s}{2},\quad x\in \pOmega G^+[\bar r,s,t_0,t_0 + k\tau_j].
$$
However, in view of Lemma \ref{lem:barrier} (a), these properties imply $F(\tau_j,k_0+k)\prec G^+[\bar r,s,t_0,t_0+k\tau_j],$ which contradicts to \eqref{chandui_jahon098}. Thus, $\bar r=0.$  Analogous contradiction argument based on Lemma \ref{lem:barrier} (b) shows $G^-[0,s,t_0,t_0+k\tau_j] \subset F(\tau_j,k_0+k)$ for all $0\le k\le \intpart{\bar t/\tau_j}.$

Finally, let us prove \eqref{tiktok_videos}. Recall that by construction $G_0^-[0,2s,t_0]\prec G_0^-[0,s,t_0]$ and $G_0^+[0,s,t_0]\prec G_0^+[0,2s,t_0],$ therefore, by the strong comparison principle (Theorem \ref{teo:comparison}) $G^-[0,2s,t_0,t]\prec G^-[0,s,t_0,t]$ and $G^+[0,s,t_0,t]\prec G^+[0,2s,t_0,t]$ for all $t\in[t_0,T].$ Now the continuity of $G^\pm[0,s,t_0,t]$ on its parameters we could find $\bar j=\bar j(s)>1$ such that for all $j>\bar j$
\begin{multline}\label{mahszrte}
G^-[0,2s,t_0,t_0+\bar t] \prec G^-[0,s,t_0,t_0+\bar k_j\tau_j] \\
\subset F(\tau_j,\bar k_j) \subset G^+[0,s,t_0,t_0 +\bar k_j\tau_j]\prec G^+[0,2s,t_0,t_0+\bar t].
\end{multline}
By the definition of $g,$
\begin{equation}\label{copsos}
\max\limits_{x\in \pOmega G^\pm[0,2s,t_0,t_0+ \bar t]}\,\, \dist(x, \pOmega E(t_0+ \bar t)) \le g(2s)
\end{equation}
and therefore, by construction in Corollary \ref{cor:time_foliations} (a) 
$$
\dist(\pOmega G_0^\pm[0,4g(2s),t_0+\bar t],\pOmega E(t_0+ \bar t)) = 4g(2s)>0.
$$
Combining this with \eqref{copsos} and the construction of $G_0^\pm$ we deduce 
$$
G_0^-[0,4g(2s),t_0+\bar t] \prec G^-[0,2s,t_0,t_0+\bar t] 
\quad\text{and}\quad 
G^+[0,2s,t_0,t_0+\bar t]
\prec 
G_0^+[0,4g(2s),t_0+\bar t].
$$
These inclusions together with \eqref{mahszrte} imply \eqref{tiktok_videos}.
\end{proof}

Now we are ready to prove the equality \eqref{shortly_equals}. Let $\bar t$ be given by Lemma \ref{lem:chaklpakes},
$$
N:=\intpart{T/\bar t} + 1
$$
and let $\sigma_0\in(0,\sigma/16)$ be such that the numbers
$$
\sigma_l=4g(2\sigma_{l-1}),\quad l=1,\ldots,N,
$$
satisfy $\sigma_l\in (0,\sigma/16).$ By the monotonicity and continuity of $g$ together with $g(0)=0,$ such choice of $\sigma_0$ is possible.

Fix any $s\in(0,\sigma_0)$ and let 
$$
a_0(s):=s,\quad a_l(s):=4g(2a_{l-1}(s)),\quad l=1,\ldots,N.
$$
Note that $a_l(s)\in (0,\sigma_l).$ In particular, the numbers $\bar j_l^s:=\bar j(a_l(s)),$ given by the last assertion of Lemma \ref{lem:chaklpakes}, are well-defined. Let also 
$$
\tilde j_l^s:=\max\{j\ge1:\,\, \tau_j\notin (0,\tau_0(a_l(s)))\}
$$
and
$$
\bar j_s:= 1 + \max_{l=0,\ldots,N}\,\max \{\bar j_l^s,\tilde j_l^s\}.
$$

By Corollary \ref{cor:time_foliations} (a)
$$
G_0^-[0,s,0]\subset E(0)=E_0=F(\tau_j,0) \subset G_0^+[0,s,0]
$$
for all $j>\bar j_s.$
Therefore, by Lemma \ref{lem:chaklpakes} applied with $k_0=0$ and $t_0=0$ we find 
$$
G^-[0,s,0,k\tau_j]\subset F(\tau_j,k) \subset G^+[0,s,0,k\tau_j],\quad k=0,1,\ldots,\bar k_j,
$$
where $\bar k_j:=\intpart{\bar t/\tau_j}.$ Moreover, since $s\in (0,\sigma_0,)$ by the last assertion of Lemma \ref{lem:chaklpakes} 
$$
G_0^-[0,a_1(s),\bar t]\subset F(\tau_j,\bar k_j) \subset G_0^+[0,a_1(s),\bar t] 
$$
for all $j\ge \bar j_s.$ Hence, we can reapply Lemma \ref{lem:chaklpakes} with $s:=a_1(s),$ $t_0=\bar t$ and $k_0=\bar k_j,$ to find 
$$
G^-[0,a_1(s),0,\bar t + k\tau_j]\subset F(\tau_j,\bar k_j+k) \subset G^+[0,a_1(s),0,\bar t+ k\tau_j],\quad k=0,1,\ldots,\bar k_j.
$$
In particular, since $j>\bar j_s> \bar j(a_1(s)),$ again by the last assertion of  Lemma \ref{lem:chaklpakes}  we deduce 
$$
G_0^-[0,a_2(s),2\bar t]\subset F(\tau_j,2\bar k_j) \subset G_0^+[0,a_2(s),2\bar t]. 
$$
Repeating this argument at most $N$ times, for all $j\ge \bar j_s$ we find 
\begin{equation}\label{nadomatlar671}
G^-[0,a_l(s),0,l\bar t + k\tau_j]\subset F(\tau_j,l\bar k_j+k) \subset G^+[0,a_l(s),0,l\bar t + k\tau_j],\quad k=0,1,\ldots,\bar k_j
\end{equation}
whenever $l=0,\ldots,N$ and $l\bar t + k\tau_j\le T.$ 

Now take any $t\in(0,T),$ and let $l:=\intpart{t/\bar t}$  and $k=\intpart{t/\tau_j} - l\bar k_j$ so that $l\bar k_j + k = \intpart{t/\tau_j}.$ By means of $l$ and $k,$ as well as the definition of $\bar k_j$ we represent \eqref{nadomatlar671} as 
\begin{multline}\label{headlight09}
G^-\Big[0,a_l(s),0,l\bar t + \tau_j\intpart{\tfrac{t}{\tau_j}} - l\tau_j\intpart{\tfrac{\bar t}{\tau_j}}\Big]\\
\subset 
F\Big(\tau_j, \intpart{\tfrac{t}{\tau_j}}\Big) 
\subset 
G^+\Big[0,a_l(s),0,l\bar t + \tau_j\intpart{\tfrac{t}{\tau_j}} - l\tau_j\intpart{\tfrac{\bar t}{\tau_j}}\Big]
\end{multline}
for all $j>\bar j_s.$ Since 
$$
\lim\limits_{j\to+\infty} \Big(l\bar t + \tau_j\intpart{\tfrac{t}{\tau_j}} - l\tau_j\intpart{\tfrac{\bar t}{\tau_j}}\Big) = t,
$$
by the continuous dependence of $G^\pm$ on its parameters, as well as the convergence \eqref{flats_converge} of the flat flows, letting $j\to+\infty$ in \eqref{headlight09} we obtain
\begin{equation}\label{endiyonimga}
G^-[0,a_l(s),0,t]\subset F(t) \subset G^+[0,a_l(s),0,t],
\end{equation}
where due to the $L^1$-convergence the inclusions in \eqref{flats_converge} are up to some negligible sets. Now we let $s\to0^+$ and recalling that $a_l(s)\to0$ (by the continuity of $g$ and assumption $g(0)=0$), from \eqref{endiyonimga} we deduce 
\begin{equation*}
G^-[0,0,0,t]\subset F(t) \subset G^+[0,0,0,t].
\end{equation*}
Then by Theorem \ref{teo:short_time} (a) 
$$
F(t)=G^\pm[0,0,0,t] = E(t).
$$

\end{document}